\newtheorem{theorem}{Theorem}
\newtheorem{lemma}{Lemma}
\newcommand{\blind}{0}
\begin{document}




\if0\blind
{
  \title{\bf Poisson PCA for matrix count data}
  \author{Joni Virta\thanks{
    The work of JV was supported by the Academy of Finland (Grant 335077). The work was supported by LMS Research in Pairs (Grant \#41821) awarded to AA to initiate the collaboration with JV. }\hspace{.2cm}\\
    Department of Mathematics and Statistics, University of Turku\\
    and \\
    \textbf{Andreas Artemiou} \\
    School of Mathematics, Cardiff University}
  \maketitle
} \fi

\if1\blind
{
  \bigskip
  \bigskip
  \bigskip
  \begin{center}
    {\LARGE\bf Poisson PCA for matrix count data}
\end{center}
  \medskip
} \fi

\bigskip
\begin{abstract}
We develop a dimension reduction framework for data consisting of matrices of counts. Our model is based on assuming the existence of a small amount of independent normal latent variables that drive the dependency structure of the observed data, and can be seen as the exact discrete analogue for a contaminated low-rank matrix normal model. We derive estimators for the model parameters and establish their root-$n$ consistency. An extension of a recent proposal from the literature is used to estimate the latent dimension of the model. Additionally, a sparsity-accommodating variant of the model is considered. The method is shown to surpass both its vectorization-based competitors and matrix methods assuming the continuity of the data distribution in analysing simulated data and real abundance data.
\end{abstract}

\noindent%
{\it Keywords:} Poisson log-normal distribution, Kronecker model, matrix normal distribution, zero-inflated Poisson distribution


\section{Introduction}

Modern applications typically see data with structures going significantly beyond the traditional ``$n$ observations of $p$ continuous variables'' framework. In this work, our focus is on data where the observations $X_i = (x_{i, jk})$, $i = 1, \ldots , n$, $j = 1, \ldots , p_1$, $k = 1, \ldots , p_2$, are $p_1 \times p_2$ matrices of non-negative counts. Such data appear naturally, for example, in the analysis of publication data ($x_{i, jk}$ describes the word count of the $j$th word for the $i$th author at the $k$th venue) \citep{hu2015scalable}, abundance studies ($x_{i,jk}$ describes the abundance of the $i$th species in the $j$th location at the $k$th time peroid) \citep{frelat2017community} and in the analysis of dyadic events ($x_{i,jk}$ describes the number of actions initiated by the $j$th actor targeting the $k$th actor during the $i$th time period) \citep{schein2015bayesian}. A common thread to all these applications is that the involved data sets are usually both large in size and inherently complex. As such, a natural first step in their analysis is dimension reduction, which both helps reduce their size and allows interpreting the data through the discovered latent variables. The development of a natural framework for the dimension reduction of matrix-valued count data is thus the objective of the current work.

In order for us to succeed in this task, the developed methods have to naturally accommodate the two main features of the data: the matrix structure and the discreteness of the observations. Ignoring the latter of these for a second, the recent decade has seen an ever-increasing amount of standard multivariate statistical methods being generalized to allow for matrix-valued data. A majority of these extensions uses the so-called \textit{Kronecker approach} to modelling which we next exemplify in the simple context of a linear latent factor model.

Given a $p_1 \times p_2$ random matrix $X$, the ``naive'' approach to latent factor modeling would be to vectorize $X$ to obtain the $(p_1 p_2)$-dimensional vector $\mathrm{vec}(X)$ ($\mathrm{vec}$ stacks the columns of its input to a long vector) and assume, for example, that
\begin{align}\label{eq:linear_latent_vector_model}
    \mathrm{vec}(X) = \mu_0 + U_0 z + \varepsilon_0,
\end{align}
where $\mu_0 \in \mathbb{R}^{p_1 p_2}$, $U_0 \in \mathbb{R}^{p_1 p_2 \times d}$ are unknown parameters and the $d$-dimensional random vector $z$ and the $(p_1 p_2)$-dimensional random vector $\varepsilon_0$ signify the latent signal and the noise, respectively. Whereas, under the Kronecker approach, we would instead preserve the matrix structure of $X$ and assume that,
\begin{align}\label{eq:linear_latent_matrix_model}
    X = \mu + U_1 Z U_2' + \varepsilon,
\end{align}
where $\mu \in \mathbb{R}^{p_1 \times p_2}$, $U_1 \in \mathbb{R}^{p_1 \times d_1}$, $U_2 \in \mathbb{R}^{p_2 \times d_2}$ are unknown parameters and the random $d_1 \times d_2$ matrix $Z$ and the random $p_1 \times p_2$ matrix $\varepsilon$ represent the latent signal and the noise, respectively. To understand the relationship between the two approaches, we apply the vectorization $\mathrm{vec}$ to the model \eqref{eq:linear_latent_matrix_model} and use the formula $\mathrm{vec}(AYB') = (B \otimes A) \mathrm{vec}(Y)$ to reveal that $d = d_1 d_2$ and that the $U$-parameters have the connection $U_0 = (U_2 \otimes U_1)$. Thus, we see that the matrix model \eqref{eq:linear_latent_matrix_model} is actually a \textit{constrained} version of the vectorized model \eqref{eq:linear_latent_vector_model} where the loading matrix $U_0$ is assumed to be a Kronecker product of two smaller matrices, $U_1$ which determines the dependency structure of the rows of $X$ and $U_2$ which governs the dependencies within the columns of $X$. Besides facilitating the previous natural intepretation, this constraining also works as a form of \textit{regularization} that helps avoid overfitting (see Section \ref{sec:examples} for an example). Indeed, the loading matrices $U_1$, $U_2$ in the model \eqref{eq:linear_latent_matrix_model} have a total of $p_1 d_1 + p_2 d_2$ parameters whereas the unconstrained $U_0$ in the vector model has $p_1 p_2 d_1 d_2$ parameters, a significantly larger amount already for moderately large dimensions. 
 
The Kronecker approach has been used to great success in developing matrix versions of, e.g., principal component analysis (PCA) \citep{zhang20052d,hung2012multilinear,ding2014dimension}, independent component analysis \citep{virta2017independent} and sufficient dimension reduction \citep{li2010dimension,ding2015tensor}. Consequently, the Kronecker approach will also be our tool of choice. Note that the same idea can also be seen to underlie several tensor decompositions, such as the higher order singular value decomposition (HOSVD) and the Tucker decomposition, see, e.g., \cite{de2000multilinear,kolda2009tensor}.

Moving on to the count aspect of our data, the error $\varepsilon$ in \eqref{eq:linear_latent_matrix_model} is typically Gaussian, implying that the model is not suitable for count data. Moreover, even if the error variable was taken to be discrete, some heavy constraints would need to be placed to the model parameters to guarantee that $X$ contains only non-negative integers, making the approach rather unnatural. As such, we will instead derive a discrete analogy for \eqref{eq:linear_latent_matrix_model}. Such discrete extensions of latent variable models have been intensively studied in the case of vector-valued data and the most popular approach is perhaps that of \textit{exponential family PCA}, where a latent variable model is assumed for the canonical parameter of an exponential family distribution, see, \cite{collins2001generalization, li2010simple, landgraf2015generalized, smallman2018sparse, smallman2019simple}. Another major framework for the modelling of count data, and the one on which we base our extension of \eqref{eq:linear_latent_matrix_model}, is the Poisson log-normal (PLN) distribution where the data are taken to be conditionally Poisson distributed with the mean parameters following the log-normal distribution. Introduced originally in \cite{aitchison1989multivariate} (but without the dimension reduction context), the PLN model has since been studied from different viewpoints, see \cite{izsak2008maximum, hall2011theory, chiquet2018variational}, in particular from the perspective of variational inference. 

In this work, we extend the PLN model to matrix-valued data using the Kronecker approach, the obtained model retaining both the interpretability and the parsimonity of the Gaussian Kronecker model \eqref{eq:linear_latent_matrix_model}. We develop natural closed-form estimators for the model parameters using the method of moments, studying also their asymptotic behavior. As far as we are aware, large-sample results for PLN models have been proposed earlier only in \cite{hall2011theory} (in the vectorial case) and even then in a restrictive context requiring repeated measurements for each observational unit. We also use the parameter estimates to derive an estimator for the latent model dimension, based on the recently proposed idea of predictor augmentation \cite{luo2020order}. Additionally, we also develop a ``zero-inflated'' version of the method targeted for sparse count data, encountered commonly in practice, and extending our asymptotic results to this case also. Note that Bayesian models targeting the same type of data (counts of matrices) have been earlier studied in the literature \citep{hu2015scalable, liu2019time} but ours appears to be the first frequentist approach.


The manuscript is organized as follows. In Section \ref{sec:notation} we discuss briefly some notation and recall the matrix normal distribution that plays a key role in defining our model. Section~\ref{sec:tppca} discusses our model along with the estimation of its parameters, the latent variables and the latent dimensions. In Section~\ref{sec:examples} we demonstrate the method using both simulated data and an application to matrix-valued abundance data. The sparse data variant of the method is proposed in Section \ref{sec:zero_inflated} and we finally close with some discussion in Section \ref{sec:discussion}.

\section{Notation and some preliminaries}\label{sec:notation}

Throughout the manuscript, the subscripts 1 and 2 are used to refer to the left-hand and the right-hand sides of the model, respectively, as in \eqref{eq:linear_latent_matrix_model}. The $p_1 \times p_2$ matrix-variate normal distribution with the mean $\mu \in \mathbb{R}^{p_1 \times p_2}$ and the invertible left and right covariance matrices $\Sigma_1 \in \mathbb{R}^{p_1 \times p_1}$, $\Sigma_2 \in \mathbb{R}^{p_2 \times p_2}$ is denoted by $\mathcal{N}_{p_1 \times p_2}(\mu, \Sigma_1, \Sigma_2)$. That is, if $X \sim \mathcal{N}_{p_1 \times p_2}(\mu, \Sigma_1, \Sigma_2)$ then $X$ has the density function $f_X: \mathbb{R}^{p_1 \times p_2} \to \mathbb{R}$ defined as,
\begin{align*}
    f_X(X) = \frac{1}{(2 \pi)^{-p_1 p_2/2} |\Sigma_1|^{p_2/2} |\Sigma_2|^{p_1/2}} \exp \left[ -\frac{1}{2} \mathrm{tr} \left\{ \Sigma_1^{-1} (X - \mu) \Sigma_2^{-1} (X - \mu)' \right\} \right],
\end{align*}
see, for example, \cite{gupta2018matrix}. In a notable special case where the covariance matrices $\Sigma_1$ and $\Sigma_2$ are diagonal matrices, the elements of $X$ are mutually independent and the variance of the $(j, k)$th element equals the product $\sigma_{1, jj} \sigma_{2, kk}$ of the corresponding diagonal elements. Note also that the parameters $\Sigma_1$, $\Sigma_2$ are defined only up to the scaling $(\Sigma_1, \Sigma_2) \mapsto (t \Sigma_1, t^{-1} \Sigma_2)$ for any $t > 0$ (we get rid of this non-identifiability in the next section via a suitable reparametrization). 


Some of our results are more naturally formulated in terms of the (column) vectorizations of the related matrices. We use the convention that the vectorization of the observed matrix $X$ is denoted using the lower case $x := \mathrm{vec}(X)$ (and similarly for the latent matrix $Z$). A property we will repeatedly use without explicit mention is $\mathrm{vec}(AYB') = (B \otimes A) \mathrm{vec}(Y)$, valid for all matrices $A, B, Y$ with appropriate dimensions for $AYB'$ to be a valid matrix.

\section{Matrix Poisson PCA}\label{sec:tppca}

\subsection{Low-rank normal and Poisson models}\label{subsec:low_rank_models}

To motivate our proposed model for the dimension reduction of matrix count data, we first briefly review the analogous low-rank model for Gaussian data. Namely, assume that the observed $p_1 \times p_2$ random matrix $X$ is generated as
\begin{align}\label{eq:matrix_normal_model}
    X = \mu + U_1 Z U_2' + \varepsilon,
\end{align}
where $\mu \in \mathbb{R}^{p_1 \times p_2}$, $Z \sim \mathcal{N}_{d_1 \times d_2}(0, \tau \Lambda_1, \tau \Lambda_2)$, $\tau > 0$, $\Lambda_1 \in \mathbb{R}^{d_1 \times d_1}$ and $\Lambda_2 \in \mathbb{R}^{d_2 \times d_2}$ are positive-definite diagonal matrices satisfying $\mathrm{tr}(\Lambda_1) = p_1$ and $\mathrm{tr}(\Lambda_2) = p_2$ for some $d_1 < p_1$, $d_2 < p_2$, the error satisfies $ \varepsilon \sim \mathcal{N}_{p_1 \times p_2}(0, \sigma I_{p_1}, \sigma I_{p_2}) $ for some $\sigma > 0$ and $I_p$ denotes the $p \times p$ identity matrix. Furthermore, the random matrices $Z$ and $\varepsilon$ are assumed to be mutually independent. Alternatively, the same model can be written by requiring that $Z \sim \mathcal{N}_{d_1 \times d_2}(0, \tau \Lambda_1, \tau \Lambda_2)$ and
\begin{align}\label{eq:matrix_normal_model_conditional}
    X \mid Z \sim \mathcal{N}_{p_1 \times p_2}(\mu + U_1 Z U_2', \sigma I_{p_1}, \sigma I_{p_2})
\end{align}
In practice, the objective underlying this model is, given a sample from the distribution of $X$, to estimate the ``loading matrices'' $U_1$ and $U_2$ along with the corresponding latent matrices $Z$, achieving dimension reduction in the process. Naturally, the latent dimensions $d_1, d_2$ are usually unknown in practice and have to be estimated as well. As is common in the dimension reduction literature we separate this problem from the estimation of the other parameters and the latent components and assume, for now, that $d_1, d_2$ are known. Their estimation is then tackled later in Section \ref{subsec:order_determination}.

The mean matrix $\mu$ in \eqref{eq:matrix_normal_model} can be estimated through $\mathrm{E}(X)$ and the covariance parameters are standardly estimated using the higher-order singular value decomposition (HOSVD) \citep{de2000multilinear}, also known as 2D$^2$PCA \citep{zhang20052d}, where the matrix $U_1$ is found through the eigenvectors of the \textit{left covariance matrix} $\mathrm{Cov}_1(X) := (1/p_2)\mathrm{E}[ \{ X - \mathrm{E}(X) \} \{ X - E(X) \}' ]$. Namely, under model \eqref{eq:matrix_normal_model}, we have 
\begin{align*}
    \mathrm{Cov}_1(X) = \tau^2 U_1 \Lambda_1 U_1'  + \sigma^2 I_{p_1},
\end{align*}
showing that the leading eigenvectors of $\mathrm{Cov}_1(X)$ serve as an estimator for $U_1$ (or, in case of non-simple eigenvalues, for the corresponding subspace). The right-hand side matrix $U_2$ can be determined similarly by first transposing the observations, after which an estimate for the latent variables is obtained as $U_1' (X - \mu) U_2 = Z + \varepsilon_0$ where $\varepsilon_0 \sim \mathcal{N}_{d_1 \times d_2}(0, \sigma I_{d_1}, \sigma I_{d_2})$. Note that a noisy estimate is indeed the best we can do since the original observations are themselves contaminated with $\varepsilon$.

We base our matrix count model on the above ideas, similarly assuming the existence of a matrix of mutually independent normal latent variables $Z \sim \mathcal{N}_{d_1 \times d_2}(0, \tau \Lambda_1, \tau \Lambda_2)$ where the covariance parameters satisfy the trace constraints $\mathrm{tr}(\Lambda_1) = p_1$ and $\mathrm{tr}(\Lambda_2) = p_2$. Conditional on $Z$ the observed $p_1 \times p_2$ matrix $X$ of counts is assumed to satisfy
\begin{align}\label{eq:matrix_poisson_model_conditional}
    X \mid Z \sim \mathrm{Po}_{p_1 \times p_2}\{ \exp ( \mu + U_1 Z U_2' ) \},
\end{align}
where the parameters $\mu, U_1, U_2$ are as in \eqref{eq:matrix_normal_model_conditional}, the exponential function is applied element-wise and the notation $\mathrm{Po}_{p_1 \times p_2}(M)$, $M \in \mathbb{R}^{p_1 \times p_2}$, refers to a distribution on $p_1 \times p_2$ matrices having independent elements and whose $(j, k)$th component is Poisson-distributed with the mean $m_{jk}$. Comparison to \eqref{eq:matrix_normal_model_conditional} now reveals that the proposed model \eqref{eq:matrix_poisson_model_conditional} is indeed a straightforward count analogy of the Gaussian model where the exponential map plays the same role as the inverse link function in log-linear models. 

For the remainder of this manuscript, we assume that we have observed a random sample $X_1, \ldots, X_n$ from the model \eqref{eq:matrix_poisson_model_conditional}. Our objectives are then three-fold: (i) We first derive root-$n$ consistent estimates for the model parameters. Of these, especially of interest are the loading matrices $U_1, U_2$ which describe how the elements of the observations $X_i$ depend on the corresponding latent variables in $Z_i$. (ii) Given the parameter estimates, we establish estimators for the latent dimensions $d_1, d_2$. The estimators are based on a recent idea for using predictor augmentation for rank estimation \citep{luo2020order}. (iii) Finally, given the previous information, we estimate the latent matrices $Z_i$ themselves. Note that the estimates are again necessarily noisy as error is introduced to the model \eqref{eq:matrix_poisson_model_conditional} through the Poisson sampling.

In the special case where $p_2 = 1$ (and our observations are vectors), model \eqref{eq:matrix_poisson_model_conditional} reduces to a multivariate Poisson log-normal distribution (meaning that the observations are conditionally Poisson-variate with log-normal mean parameters), that was first proposed in \cite{aitchison1989multivariate} for modelling multivariate count data. However, they did not consider the model from the viewpoint of dimension reduction, meaning that our results on the estimation of the latent variables and their dimension are novel also for the case $p_2 = 1$. 

\subsection{Parameter estimation}\label{subsec:parameter_estimation}

The model \eqref{eq:matrix_poisson_model_conditional} has a total of six parameters to estimate, $\mu$, $ U_1 $, $U_2$, $\tau^2$, $\Lambda_1$ and $\Lambda_2$. The model being fully parametric, a natural approach to their estimation would be maximum likelihood, as was done with the vectorial version of the model in \cite{aitchison1989multivariate, izsak2008maximum, hall2011theory}. However, the marginal density of $X$ in the model involves an integral lacking a closed-form solution which, besides complicating the parameter estimation, would also make studying the asymptotic properties of the estimators very difficult. Hence, we base our subsequent estimators on the method of moments which, conveniently, yields analytical solutions with tractable asymptotic behavior. Note that the vectorial counterparts of the method of moments estimators were considered already in \cite{aitchison1989multivariate,izsak2008maximum} as initial values for the MLEs.


For $j, k = 1, \ldots , p_1$, $j \neq k$, we define the following quantities
\begin{align}\label{eq:left_side_s_elements} 
	s_{1, jk} := \frac{1}{p_2} \sum_{\ell= 1}^{p_2} \log \left\{ \frac{\mathrm{E}(x_{j\ell} x_{k \ell})}{\mathrm{E}(x_{j\ell}) \mathrm{E}(x_{k \ell})} \right\}, \quad	s_{1, jj} := \frac{1}{p_2} \sum_{\ell = 1}^{p_2} \log \left[ \frac{\mathrm{E} \{ x_{j \ell} (x_{j \ell} - 1) \} }{ \{ \mathrm{E}(x_{j \ell}) \}^2} \right],
\end{align}
along with their ``right-hand side'' variants, defined for $j, k = 1, \ldots , p_2$, $j \neq k$,
\begin{align*} 
	s_{2, jk} := \frac{1}{p_1} \sum_{\ell= 1}^{p_1} \log \left\{ \frac{\mathrm{E}(x_{\ell j} x_{\ell k})}{\mathrm{E}(x_{\ell j}) \mathrm{E}(x_{\ell k})} \right\}, \quad	s_{2, jj} := \frac{1}{p_1} \sum_{\ell = 1}^{p_1} \log \left[ \frac{\mathrm{E} \{ x_{\ell j} (x_{\ell j} - 1) \} }{ \{ \mathrm{E}(x_{\ell j}) \}^2} \right].
\end{align*}

Let $S_1$ be the $p_1 \times p_1$ matrix having the $s_{1, jk}$ as its elements and analogously for $S_2$. Recall also that $\tau > 0$ denotes the joint scaling parameter of the latent covariance matrices in the model \eqref{eq:matrix_poisson_model_conditional}. Then the following holds.

\begin{lemma}\label{lem:covariance_estimators}
Under the model \eqref{eq:matrix_poisson_model_conditional}, we have
\begin{align*}
    S_1 = \tau^2 U_1 \Lambda_1 U_1', \quad S_2 = \tau^2 U_2 \Lambda_2 U_2'.
\end{align*}
\end{lemma}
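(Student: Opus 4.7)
The plan is to compute each entry of $S_1$ by conditioning on $Z$, applying the Poisson factorial-moment identities, and then evaluating a Gaussian moment generating function. Writing $M := \mu + U_1 Z U_2'$ for the matrix of conditional log-means, the Poisson structure together with the conditional independence of the entries of $X$ given $Z$ yields, for $j \neq k$,
\begin{align*}
    \mathrm{E}(x_{j\ell} x_{k\ell}) = \mathrm{E}(e^{M_{j\ell} + M_{k\ell}}), \quad \mathrm{E}(x_{j\ell}) = \mathrm{E}(e^{M_{j\ell}}), \quad \mathrm{E}\{ x_{j\ell}(x_{j\ell}-1) \} = \mathrm{E}(e^{2 M_{j\ell}}).
\end{align*}
Each of these is a Gaussian MGF, since $M_{j\ell} - \mu_{j\ell} = u_{1,j}' Z u_{2,\ell} = (u_{2,\ell} \otimes u_{1,j})' \mathrm{vec}(Z)$ is an affine function of $\mathrm{vec}(Z) \sim \mathcal{N}_{d_1 d_2}(0, \tau^2 \Lambda_2 \otimes \Lambda_1)$, where $u_{1,j}$ and $u_{2,\ell}$ denote the $j$th and $\ell$th rows of $U_1$ and $U_2$ respectively.

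Writing $\alpha^{(1)}_{jk} := (U_1 \Lambda_1 U_1')_{jk}$ and $\alpha^{(2)}_{jk} := (U_2 \Lambda_2 U_2')_{jk}$, the Kronecker identity $(u_{2,\ell} \otimes u_{1,j})' (\Lambda_2 \otimes \Lambda_1) (u_{2,\ell'} \otimes u_{1,j'}) = \alpha^{(2)}_{\ell \ell'} \alpha^{(1)}_{j j'}$ gives
\begin{align*}
    \mathrm{E}(e^{M_{j\ell}}) &= \exp\!\bigl( \mu_{j\ell} + \tfrac{1}{2} \tau^2 \alpha^{(1)}_{jj} \alpha^{(2)}_{\ell\ell} \bigr), \\
    \mathrm{E}(e^{2 M_{j\ell}}) &= \exp\!\bigl( 2 \mu_{j\ell} + 2 \tau^2 \alpha^{(1)}_{jj} \alpha^{(2)}_{\ell\ell} \bigr), \\
    \mathrm{E}(e^{M_{j\ell} + M_{k\ell}}) &= \exp\!\bigl( \mu_{j\ell} + \mu_{k\ell} + \tfrac{1}{2} \tau^2 \bigl( \alpha^{(1)}_{jj} + 2 \alpha^{(1)}_{jk} + \alpha^{(1)}_{kk} \bigr) \alpha^{(2)}_{\ell\ell} \bigr).
\end{align*}
Substituting these into the ratios defining $s_{1, jk}$ and $s_{1, jj}$, the $\mu$-terms and the diagonal contributions $\alpha^{(1)}_{jj}, \alpha^{(1)}_{kk}$ cancel in the logarithms, so that in both the off-diagonal and diagonal cases what remains inside the sum over $\ell$ is simply $\tau^2 \alpha^{(1)}_{jk} \alpha^{(2)}_{\ell\ell}$ (with $j = k$ for the diagonal).

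To finish, I would invoke the identifiability constraints inherited from Section~\ref{subsec:low_rank_models}, namely the orthonormality of the columns of $U_2$ together with the trace normalization $\mathrm{tr}(\Lambda_2) = p_2$, which imply $\sum_{\ell=1}^{p_2} \alpha^{(2)}_{\ell\ell} = \mathrm{tr}(U_2 \Lambda_2 U_2') = \mathrm{tr}(\Lambda_2) = p_2$. Averaging over $\ell$ then yields $(S_1)_{jk} = \tau^2 \alpha^{(1)}_{jk} = \tau^2 (U_1 \Lambda_1 U_1')_{jk}$ for all $j, k$, and the identity for $S_2$ follows by the symmetric argument obtained upon transposing. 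The only mildly delicate step is the Kronecker factorization of the quadratic form in $\Lambda_2 \otimes \Lambda_1$; the rest of the argument is essentially bookkeeping with MGFs.
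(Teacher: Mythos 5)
Your proposal is correct and follows essentially the same route as the paper's proof: conditioning on $Z$, using the Poisson factorial-moment identities, evaluating Gaussian moment generating functions for $u_{1,j}' Z u_{2,\ell}$, and cancelling the $\mu$- and diagonal terms before averaging over $\ell$ with $\mathrm{tr}(U_2 \Lambda_2 U_2') = p_2$. The only difference is cosmetic bookkeeping (you vectorize $Z$ and use the Kronecker quadratic form, and you state the orthonormality of $U_2$ explicitly where the paper leaves it implicit).
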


Lemma \ref{lem:covariance_estimators} shows that $ \mathrm{tr}(S_1)/(2 p_1) + \mathrm{tr}(S_2)/(2 p_2) = \tau^2 $, allowing the estimation of $\tau^2$ through $S_1$ and $S_2$. Consequently, the matrices $U_1$ and $\Lambda_1$ can be estimated through the leading $d_1$ eigenvectors and eigenvalues of $S_1/\tau^2$, respectively (implying that $S_1$ plays the role of the matrix $\mathrm{Cov}_1(X)$ in the Poisson model), and $U_2, \Lambda_2$ can be obtained similarly from $S_2$. This leaves us just with the mean parameter $\mu$ which, while a nuisance parameter in the Gaussian model \eqref{eq:matrix_normal_model}, may in the Poisson model be of independent interest, being part of the latent variables $\mu + U_1 Z U_2'$. To estimate it, we use the relationship
\begin{align*}
    \mu_{j k} = 2 \log \mathrm{E}(x_{jk}) - \frac{1}{2} \log \mathrm{E} \{ x_{j k} (x_{j k} - 1) \}
\end{align*}
following from the proof of Lemma \ref{lem:covariance_estimators}.

Let us next interpret the matrices $S_1$ and $S_2$. In the special case when $p_2 = 1$ (making the observation $X$ simply a $p_1$-variate vector $x$), the matrix $S_1$ has its $(j, k)$th off-diagonal element and its $(j, j)$th diagonal element equal to
\begin{align}\label{eq:S_for_vectors}
    \log \left\{ \frac{\mathrm{E}(x_{j} x_{k })}{\mathrm{E}(x_{j}) \mathrm{E}(x_{k})} \right\} \quad \mbox{and} \quad \log \left[ \frac{\mathrm{E} \{ x_{j} (x_{j} - 1) \} }{ \{ \mathrm{E}(x_{j}) \}^2} \right]
\end{align}
respectively, showing that $S_1$ may be viewed as a count data analogue of the ordinary covariance matrix. That is, instead of additive centering by the mean, we conduct the multiplicative centering $x_j \mapsto x_j/\mathrm{E}(x_j)$ and, instead of raw moments, we use the factorial moments. In the literature on elliptical distributions, a commonly used family of alternatives to the covariance matrix are known as scatter functionals, defined as any \textit{affine equivariant} mappings $F \mapsto S(F)$ of a $p$-variate distribution $F$ to the space of positive semi-definite matrices. By affine equivariance, it is meant that, for any invertible matrix $A \in \mathbb{R}^{p \times p}$ and any $b \in \mathbb{R}^p$, the scatter functional satisfies $S(F_{A, b}) = A S(F) A'$ where $F_{A, b}$ is the distribution of the random vector $A x + b$ and $x \sim F$, see, for example, \cite{tyler2009invariant} for examples and references on scatter functionals in the context of dimension reduction.

Now, being also an alternative of sorts to the covariance matrix, it is of interest to see whether the current matrix $S_1$ possesses any similar properties. For a random $p$-variate count vector $x$, it is seen from \eqref{eq:S_for_vectors} that the matrix $S_1$ is invariant under the transformations $x \mapsto D x$ where $D \in \mathbb{R}^{p \times p}$ is an arbitrary diagonal matrix with positive diagonal elements. Hence, we observe that $S_1$ does not actually measure the ``scatter'', or scale, of $x$ but rather some higher order property (``shape''). Inspection also reveals that if the $j$th and $k$th element of $x$ are independent, the corresponding off-diagonal element of $S_1$ vanishes, which is known in the context of scatter functionals as the (element-wise) independence property \citep{nordhausen2015cautionary}. Take now the elements of $x$ to be i.i.d. from various standard count data distributions: If $x_1 \sim \mathrm{Po}(\lambda)$, we have $s_{1, 11} = 0$ (and, consequently, $S_1 = 0)$, showing that Poisson-distribution is viewed as being pure noise by the matrix $S_1$ (this observation will be used in Section \ref{subsec:order_determination} to estimate the latent dimension $d$). If $x_1 \sim \mathrm{NegBin}(r, p)$ (the negative binomial with success probability $p$ and stopping after the $r$th failure), then $s_{1, 11} = \log(1 + 1/r)$. If $x \sim \mathrm{Bin}(n, p)$, we get $s_{1, 11} = \mathrm{log}(1 - 1/n)$, showing, in particular, that $S_1$ is not necessarily positive semi-definite. A common thread behind the previous cases is that in all three the value of $S_1$ is independent of a subset of the involved parameters (taken to the extreme with the Poisson-distribution). Additionally, the signs of the diagonal elements of $S_1$ correspond in each case with the presence of overdispersion in the distributions (negative/positive sign being linked with underdispersion/overdispersion). Hence, instead of being a measure of scale, it seems more fitting to view the matrix $S_1$ as measuring the amount of overdispersion in the data (this analogy is not perfect, as, e.g., for the negative binomial distribution the severity of the overdispersion depends on the parameter $p$, to which $S_1$ is invariant).

With the interpretation out of the way, we next turn to the asymptotic properties of the parameter estimates under the model \eqref{eq:matrix_poisson_model_conditional}. For simplicity, we state the results for the aggregate parameters $S_1$ and $S_2$, after which consistent estimates for, e.g., $U_1, \Lambda_1$ are obtained from the eigendecomposition of $S_1$, the consistency of the former requiring that the diagonal elements of $\Lambda_1$ are distinct. Regardless of the multiplicities of the signal eigenvalues in $\Lambda_1$ and $\Lambda_2$, the projection matrices $U_1 U_1'$ and $U_2 U_2'$ to the latent subspaces are in any case unique and consistently estimable through the corresponding sample versions, see, e.g., \cite{tyler1981asymptotic,eaton1991wielandt} for large-sample properties of eigenvectors and eigenvalues. Given a random sample $X_1, \ldots, X_n$ from the model \eqref{eq:matrix_poisson_model_conditional}, let in the following $S_{n1}$ denote the sample version of the matrix $S_1$ (that is, with elements as in \eqref{eq:left_side_s_elements} but with the expected values replaced by sample means) and define $S_{n2}$ analogously. Finally, define $t^2_n := \mathrm{tr}(S_{n1})/(2 p_1) + \mathrm{tr}(S_{n2})/(2 p_2) $ and let the elements of the $p_1 \times p_2$ matrix $M_n$ be $m_{n, jk} := 2 \log \{ (1/n) \sum_{i=1}^n x_{i, jk} \}  - (1/2) \log \{ (1/n) \sum_{i=1}^n x_{i, jk} (x_{i, jk} - 1) \}$. Then, the previous constitute root-$n$ consistent estimators of the model parameters.

\begin{theorem}\label{theo:asymptotics}
    Under the model \eqref{eq:matrix_poisson_model_conditional}, each of the deviations,
    \begin{align*}
        S_{n1} - S_1, \quad S_{n2} - S_2, \quad t_n^2 - \tau^2 \quad \mbox{and} \quad M_n - \mu,
    \end{align*}
    is of the order $\mathcal{O}_p(1/\sqrt{n})$ as $n \rightarrow \infty$.
\end{theorem}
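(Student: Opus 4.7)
The plan is to reduce Theorem \ref{theo:asymptotics} to the $\sqrt n$-consistency of sample averages of a handful of elementary statistics, and then propagate that rate through the smooth (logarithm, linear) transformations that define $S_{n1}$, $S_{n2}$, $t_n^2$, and $M_n$. Concretely, the only raw statistics appearing in these estimators are, for each relevant pair of indices, the sample means of $x_{i,jk}$, of $x_{i,jk}(x_{i,jk}-1)$, and of $x_{i,jk}\, x_{i,j'k'}$. Once I establish finite variances for these summands, Chebyshev (or the CLT) immediately yields the $\mathcal{O}_p(1/\sqrt n)$ rate, and the rest is delta-method bookkeeping.

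The first substantive step is to verify finiteness of all required moments. Under the model \eqref{eq:matrix_poisson_model_conditional}, conditional on $Z$ the entries $x_{jk}$ are independent Poisson with means $\exp(m_{jk})$, where $m_{jk}$ denotes the $(j,k)$-entry of $\mu + U_1 Z U_2'$. The Poisson factorial-moment identity $\mathrm{E}\{x_{jk}(x_{jk}-1)\cdots(x_{jk}-r+1)\mid Z\}= \exp(r m_{jk})$, combined with conditional independence across $(j,k)$ and the tower property, reduces every joint moment of $(x_{jk}, x_{j'k'})$ to an exponential moment of a Gaussian linear combination of the entries of $Z$. All such moments are finite, so in particular each of the raw statistics above has finite variance.

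Next, I need each population expectation that appears inside a logarithm to be strictly positive, so that $\log$ is smooth in a neighbourhood of the corresponding sample average with probability tending to one. This is automatic under the model: $\mathrm{E}(x_{jk}) = \mathrm{E}\{\exp(m_{jk})\}>0$, $\mathrm{E}\{x_{jk}(x_{jk}-1)\} = \mathrm{E}\{\exp(2 m_{jk})\}>0$, and $\mathrm{E}(x_{jk} x_{j'k'}) = \mathrm{E}\{\exp(m_{jk}+m_{j'k'})\}>0$. Combined with the $\sqrt n$-rate of the sample means from the previous step, each sample average lies in a fixed compact subset of $(0,\infty)$ with probability tending to one.

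Finally, applying the delta method to $\log$ on that compact set yields $\log\{(1/n)\sum_i g(X_i)\}- \log\{\mathrm{E}\,g(X)\}= \mathcal{O}_p(1/\sqrt n)$ for each of the three choices of $g$. Since the $(j,k)$ entries of $S_{n1}$, $S_{n2}$ and $M_n$, and the scalar $t_n^2$, are fixed linear combinations of finitely many such log-transformed sample means (with $t_n^2$ also involving a $1/(2p_1)+1/(2p_2)$ averaging), they inherit the $\sqrt n$-rate entrywise, which is precisely the theorem's claim. The only delicate point in the write-up is the positivity/boundedness-away-from-zero bookkeeping for the sample averages that enter the logarithms; once this is handled, no genuine difficulty remains beyond routine calculation.
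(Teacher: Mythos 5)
Your proposal is correct and follows essentially the same route as the paper's proof: establish finiteness of the needed (factorial/cross) moments via the conditional Poisson factorial-moment identity and Gaussian exponential moments, invoke the CLT for the vector of first- and second-order sample factorial moments, note positivity of the population expectations entering the logarithms, and apply the delta method to the log-ratio maps, with $t_n^2$ and $M_n$ handled as fixed linear combinations of the same log-transformed means. The only cosmetic difference is that you apply the delta method coordinatewise to each $\log$ of a sample mean and then take linear combinations, while the paper applies it directly to the maps $g_1,g_2$; this changes nothing substantive.
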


Theorem \ref{theo:asymptotics} is an understatement in the sense that, besides the convergence rate, its proof would also allow establishing the exact limiting normal distributions of each of the estimators, with closed-form solutions for the corresponding asymptotic covariance matrices. However, we have refrained from presenting them here for two reasons. First, the limiting covariance matrices play no role in any of the subsequent developments, nor would they allow comparison to any competing methods as we are unaware of any equivalent results for the existing count data dimension reduction methods in the literature. Second, the asymptotic covariance matrices of the estimators are algebraically very complicated and lacking of any clear, intuitive form. To exemplify the latter point, in Appendix \ref{sec:limiting_distribution} we have derived the asymptotic variance of $S_{n1}$ in the simple special case with $p_1 = p_2 = 1$. The previous considerations aside, the result of Theorem \ref{theo:asymptotics} is, with or without exact limiting distributions, essential to our development in that it allows the estimation of the latent variables, $Z_1, \ldots , Z_n$, described next.

\subsection{Latent component estimation}\label{subsec:variable_estimation}

For convenience, the results of this section are formulated in terms of the column vectorizations $ x $ and $ z $ of the matrices $X$ and $Z$. Additionally, we denote $m := \mathrm{vec}(\mu)$, $\Lambda := \Lambda_2 \otimes \Lambda_1$, $U := U_2 \otimes U_1$, $p := p_1 p_2$ and $d := d_1 d_2$. Recall from Section \ref{subsec:low_rank_models} that in the matrix normal model \eqref{eq:matrix_normal_model_conditional}, a natural estimator for the latent variables, or principal components (PCs), $z$ is obtained straightforwardly as $U' \{ x - \mathrm{vec}(\mu) \}$. The simple, linear form of the estimator can be seen to follow from the fact that the observed and the latent matrices belong to the same distributional family, a property that does not hold for the Poisson model \eqref{eq:matrix_poisson_model_conditional}. However, we can still draw an analogy with the normal model by observing that the linear estimate $U' \{ x - m \}$ admits a characterization as the mode of the conditional distribution of the scaled latent components $(I_d + \sigma^2 \tau^{-2} \Lambda^{-1}) z$ given the observation $x$.

\begin{lemma}\label{lem:normal_conditional_distribution}
    Under the normal model \eqref{eq:matrix_normal_model_conditional}, we have
    \begin{align*}
        (I_d + \sigma^2 \tau^{-2} \Lambda^{-1}) z \mid x \sim \mathcal{N}_d \{ U'  (x - m), \sigma^2 (I_d + \sigma^2 \tau^{-2} \Lambda^{-1}) \}.
    \end{align*}
\end{lemma}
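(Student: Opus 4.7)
The plan is to recast the matrix-variate model in vectorized form and then apply the standard conditional Gaussian formulas, using the Kronecker/Woodbury identities to simplify. Applying $\mathrm{vec}$ to \eqref{eq:matrix_normal_model_conditional} and using $\mathrm{vec}(AYB') = (B \otimes A)\mathrm{vec}(Y)$, the model becomes $z \sim \mathcal{N}_d(0, \tau^2 \Lambda)$ with $\Lambda = \Lambda_2 \otimes \Lambda_1$, and $x \mid z \sim \mathcal{N}_p(m + Uz, \sigma^2 I_p)$ with $U = U_2 \otimes U_1$. Since $U_1$ and $U_2$ have orthonormal columns (as assumed throughout via their identification with eigenvectors), we also have $U'U = U_2'U_2 \otimes U_1'U_1 = I_d$.

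Next, I would exploit the fact that $\varepsilon := x - m - Uz$ is independent of $z$ and Gaussian, so that $(z, x)$ is jointly Gaussian with cross-covariance $\mathrm{Cov}(z, x) = \tau^2 \Lambda U'$ and marginal covariance $\mathrm{Cov}(x) = U \tau^2 \Lambda U' + \sigma^2 I_p$. The standard formulas then give
\begin{align*}
\mathrm{E}(z \mid x) &= \tau^2 \Lambda U'(U \tau^2 \Lambda U' + \sigma^2 I_p)^{-1}(x - m), \\
\mathrm{Cov}(z \mid x) &= \tau^2 \Lambda - \tau^2 \Lambda U'(U \tau^2 \Lambda U' + \sigma^2 I_p)^{-1} U \tau^2 \Lambda .
\end{align*}

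The third step is algebraic simplification. Applying the Woodbury identity yields
\[
(U \tau^2 \Lambda U' + \sigma^2 I_p)^{-1} = \sigma^{-2} I_p - \sigma^{-4} U\bigl(\sigma^{-2} I_d + \tau^{-2} \Lambda^{-1}\bigr)^{-1} U',
\]
and plugging this in, together with $U'U = I_d$ and the fact that the diagonal matrices $\Lambda$ and $(I_d + \sigma^2 \tau^{-2} \Lambda^{-1})^{-1}$ commute, collapses the above expressions to
\[
\mathrm{E}(z \mid x) = (I_d + \sigma^2 \tau^{-2} \Lambda^{-1})^{-1} U'(x - m), \qquad
\mathrm{Cov}(z \mid x) = \sigma^2 (I_d + \sigma^2 \tau^{-2} \Lambda^{-1})^{-1} .
\]

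Finally, since the conditional distribution of $z$ given $x$ is Gaussian, so is that of the linear transform $T z$ with $T := I_d + \sigma^2 \tau^{-2} \Lambda^{-1}$. Its mean equals $T \cdot T^{-1} U'(x-m) = U'(x-m)$, and (using symmetry and commutativity) its covariance equals $T \cdot \sigma^2 T^{-1} \cdot T = \sigma^2 T$, which is precisely the claim. The only mildly delicate step is the Woodbury simplification, but it is routine given the orthonormality of $U$ and the diagonality of $\Lambda$; no further obstacles arise.
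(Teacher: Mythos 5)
Your proposal is correct and follows essentially the same route as the paper: vectorize the model, form the joint Gaussian distribution of $(z',x')'$, apply the standard conditional-normal formulas, and simplify using $U'U = I_d$ before transforming by $I_d + \sigma^2\tau^{-2}\Lambda^{-1}$. The only cosmetic difference is that you invoke the Woodbury identity explicitly, whereas the paper uses the equivalent push-through simplification $\tau^2\Lambda U'(\tau^2 U\Lambda U' + \sigma^2 I_p)^{-1} = \tau^2\Lambda(\tau^2\Lambda + \sigma^2 I_d)^{-1}U'$.
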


Guided by Lemma \ref{lem:normal_conditional_distribution}, we estimate the principal components $z$ in the Poisson model \eqref{eq:matrix_poisson_model_conditional} analogously as the mode of the conditional distribution of $ z $ given $ x $ (we do not incorporate the scaling matrix $I_d + \sigma^2 \tau^{-2} \Lambda^{-1}$ as $\sigma^2$ has no analogue in the Poisson model and, besides, scale is often anyway seen as a nuisance in dimension reduction). Unlike in the normal model, the resulting conditional distribution does not belong to any standard distributional family, but its mode can still be estimated efficiently through numerical maximization of a concave objective function, as shown in the next lemma. Similar approaches have been used earlier for count data in \cite{li2010simple, kenney2019poisson}. In the sequel, we denote by $ \ell(z | x) := \log f_{z | x} (z | x)$ the logarithmic density function of the conditional distribution.

\begin{theorem} \label{theo:logarithmic_conditional_density}
The logarithmic conditional density $\ell(z | x)$ satisfies the following.
\begin{itemize}
    \item[i)] For a constant $C$ not depending on $z$,
	\begin{align*}
	\ell(z | x) = C + x' U z - 1' \exp ( m + U z ) - \frac{1}{2 \tau^2} z' \Lambda^{-1} z,
	\end{align*}
	where $1 \in \mathbb{R}^p$ is a vector of ones and the exponential function is applied element-wise.
	\item[ii)] For all $x \in \mathbb{R}^{p}$, the function $z \mapsto \ell (z | x)$ is strictly concave and admits a unique maximum in $\mathbb{R}^{d}$.
\end{itemize}
\end{theorem}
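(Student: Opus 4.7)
For part (i), the plan is to apply Bayes' rule, $f_{z \mid x}(z \mid x) \propto f_{x \mid z}(x \mid z) f_z(z)$, and work out each factor after vectorization. Conditional on $Z$, the model \eqref{eq:matrix_poisson_model_conditional} makes the entries of $X$ independent Poisson with means $\exp(\mu + U_1 Z U_2')$, so after applying $\mathrm{vec}$ and using $\mathrm{vec}(U_1 Z U_2') = U z$ the conditional log-density is $\sum_{j=1}^{p} \bigl[ x_j (m_j + (Uz)_j) - \exp(m_j + (Uz)_j) \bigr]$ minus terms depending only on $x$; this equals $x' U z + x' m - 1' \exp(m + Uz)$ up to an $x$-dependent constant. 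The prior $Z \sim \mathcal{N}_{d_1 \times d_2}(0, \tau \Lambda_1, \tau \Lambda_2)$ vectorizes to $z \sim \mathcal{N}_d(0, \tau^2 \Lambda)$ with $\Lambda = \Lambda_2 \otimes \Lambda_1$, contributing $-\tfrac{1}{2 \tau^2} z' \Lambda^{-1} z$ plus a constant. Collecting all $z$-free terms into $C$ gives the stated formula.

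For part (ii), the strategy is to show strict concavity by computing the Hessian and then deduce existence and uniqueness of the maximizer via coercivity. Differentiating the expression from part (i) twice, the linear term $x' U z$ drops out, the term $-1' \exp(m + Uz)$ contributes $-U' \mathrm{diag}\{\exp(m + Uz)\} U$, which is negative semi-definite, and the quadratic term contributes $-\tau^{-2} \Lambda^{-1}$, which is strictly negative definite since $\Lambda$ is a positive-definite diagonal matrix. The sum is therefore strictly negative definite on $\mathbb{R}^d$, so $z \mapsto \ell(z \mid x)$ is strictly concave.

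For existence of the maximum, I would argue by coercivity: as $\|z\| \to \infty$, the quadratic term drives $\ell(z \mid x) \to -\infty$ at rate $-\|z\|^2$, the linear term $x' U z$ grows only linearly, and the remaining term $-1' \exp(m + Uz)$ is bounded above by $0$. Hence the super-level sets $\{ z : \ell(z \mid x) \geq c\}$ are bounded, and combined with continuity of $\ell$ this gives a maximizer, which is unique by strict concavity.

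I do not expect a serious obstacle here: the proof is essentially an exercise in Kronecker-vectorization bookkeeping for part (i) and a standard concavity/coercivity argument for part (ii). The only minor point requiring care is ensuring that the Hessian of $-1' \exp(m + Uz)$ is written correctly as $-U' \mathrm{diag}\{\exp(m+Uz)\} U$ (so that strict concavity comes from the quadratic prior term rather than from a rank condition on $U$), and that the coercivity bound handles the linear term $x'Uz$ uniformly in $x$ for each fixed observation.
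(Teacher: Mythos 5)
Your proposal is correct and follows essentially the same route as the paper: Bayes' rule plus vectorization bookkeeping for part (i), and for part (ii) the Hessian computation $-U'\mathrm{diag}\{\exp(m+Uz)\}U - \tau^{-2}\Lambda^{-1}$ (with strict negative definiteness coming from the prior term) combined with a coercivity argument in which $-1'\exp(m+Uz)$ is dropped as nonpositive and the linear term is bounded by $\|z\|\,\|U'x\|$. Nothing further is needed.
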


Denote the gradient and the Hessian matrix of the map $z \mapsto \ell(z | x)$ as $g(z | x)$ and $H(z | x)$, respectively, the exact forms of which are given in the proof of Lemma \ref{theo:logarithmic_conditional_density} in Appendix \ref{sec:proofs}. Furthermore, given the estimates of the model parameters from Section \ref{subsec:parameter_estimation}, denote by $\ell_n(z | x)$, $g_n(z | x)$ and $H_n(z | x)$ the logarithmic conditional density, gradient and Hessian, respectively, with the parameter estimates plugged in. The $d$-variate latent vector $z_i$ corresponding to a vectorized observation $x_i$ can now be estimated as the unique maximizer of $z \mapsto \ell_n(z | x_i)$ using the standard Newton-Raphson method, Lemma \ref{theo:logarithmic_conditional_density} guaranteeing its convergence. Recall finally that the model \eqref{eq:matrix_poisson_model_conditional} assumes the principal components to have zero mean. Hence, as the final step in their estimation, we still center the estimated sample PCs $z_1, \ldots , z_n \in \mathbb{R}^d$.

We end the section with a collection of remarks. Interestingly, Theorem \ref{theo:logarithmic_conditional_density} reveals that the estimate of the latent variables $z_i$ depends on the observed data $x_i$ only through the projection $U_n' x_i$ where $U_n := U_{n2} \otimes U_{n1}$ and the $p_1 \times d_1$ matrix $U_{n1}$ contains any first $d_1$ eigenvectors of $S_{n1}$ as its columns (and similarly for $U_{n2}$). This is somewhat surprising as, based on the formulation of the model \eqref{eq:matrix_poisson_model_conditional}, one would expect the matrix $U_n$ to act linearly only with $Z$, and not with $X$ (to which it has a non-linear functional dependency). Finally, while we viewed the Gaussian estimate $U' \{ x - \mathrm{vec}(\mu) \}$ above as the mode of the conditional normal distribution in Lemma \ref{lem:normal_conditional_distribution}, it is, naturally, also the mean of the same distribution. Thus, an alternative strategy in the Poisson model would be to base the estimates of the latent components on the conditional means of the random vector $z$ given the observations $x_i$. However, while equally valid (and heuristic) as the taken viewpoint, relying on the mean would lead to an intractable integral requiring numerical approximation, leading us to favor the mode approach with its concave optimization problem.



\subsection{Dimension estimation}\label{subsec:order_determination}

We next develop an estimator for the latent dimension $d_1$ using the recently proposed idea of predictor augmentation \citep{luo2020order}. By the symmetry of the model \eqref{eq:matrix_poisson_model_conditional}, an estimator for $d_2$ is obtained exactly analogously after the transposition of the observations.

In predictor augmentation, artificially generated noise is concatenated to the observations in order to reveal the cut-off point from positive values to zero in the spectrum of the matrix of interest. More precisely, given a random sample $X_1, \ldots, X_n$ from the model \eqref{eq:matrix_poisson_model_conditional}, fix a positive integer $r_1 \in \mathbb{N}^+$ and let $X_1^*, \ldots , X_n^*$ be the augmented sample where $X_i^* = (X_i', R_i')'$ and the elements of the $r_1 \times p_2$ matrices $R_i$ are sampled i.i.d. from the Poisson distribution with unit mean, $i = 1, \ldots , n$. Letting $S_{n1}^*$ denote the equivalent of the matrix $S_{n1}$ but computed from the augmented sample, techniques similar to the ones used in Lemma \ref{lem:covariance_estimators} and Theorem \ref{theo:asymptotics} show that
\begin{align}\label{eq:augmented_S1}
    S_{n1}^* = \begin{pmatrix}
    \tau^2 U_1 \Lambda_1 U_1' & 0 \\
    0 & 0
    \end{pmatrix} + \mathcal{O}_p(1/\sqrt{n}).
\end{align}
Let now the $r_1$-dimensional vectors $\beta_{n11}, \ldots , \beta_{n1(p_1 + r_1)}$ contain the final $r_1$ elements of any set of orthogonal eigenvectors of $S_{n1}^*$ (that is, $\beta_{n11}$ contains the last $r_1$ entries of an eigenvector corresponding to the first eigenvalue of $S_{n1}^*$ etc.) Now, for $k \leq d_1$, we expect the norms $\| \beta_{n1k} \|$ to be close to zero as the corresponding eigenspaces are, in the limit $n \rightarrow \infty$, concentrated fully on the subspace spanned by the $d_1$ columns of the $(p_1 + r_1) \times d_1$ matrix $(U_1', 0)'$. On the other hand, for $k > d_1$, there is no reason for $\| \beta_{n1k} \|$ to be small as the final $p_1 + r_1 - d_1$ eigenvalues of the limiting matrix in \eqref{eq:augmented_S1} are all equal to zero and, hence, the corresponding eigenvectors should not favor any direction (in the null space). For a rigorous presentation of this concept, along with more details on the full procedure, see \cite{luo2020order}. In predictor augmentation, this information provided by the eigenvectors is further supplemented by the eigenvalues $\lambda_{n11} \geq \cdots \geq \lambda_{n1(p_1 + r_1)}$ of the matrix $S_{n1}^*$ to define the objective function $\phi_{n1}: \{0, \ldots , p_1\} \to \mathbb{R}$
\begin{align}\label{eq:augmentation_objective}
    \phi_{n1}(k) = \sum_{j = 0}^k \| \beta_{n1j} \|^2 + \frac{\lambda_{n1(k + 1)}}{1 + \sum_{j = 1}^{k + 1} \lambda_{n1j}},
\end{align}
where we define $\| \beta_{n10} \| $ to be equal to zero. Note that the second term of \eqref{eq:augmentation_objective} corresponds essentially to a scaled version of the scree plot used commonly in PCA. By the earlier discussion, we expect the first term of $\phi_{n1}(k)$ to be small for $k \leq d_1 $, whereas, the second term takes (for large enough $n$) small values for $k \geq d_1$ (i.e., at the indices corresponding to the zero limit eigenvalues). Consequently, we take as our estimate of $d_1$ the value $k$ at which $\phi_{n1}$ is minimized,
\begin{align*}
    d_{n1} := \mbox{argmin}_{k \in \{ 0, \ldots , p \} } \phi_{n1}(k). 
\end{align*}
To increase the stability of the estimate for small $n$, \cite{luo2020order} further advocated independently carrying out the augmentation procedure $s_1$ times and replacing $\| \beta_{n1j} \|^2$ in \eqref{eq:augmentation_objective} with its mean over the $s_1$ replicates. Similarly, we also replace the eigenvalues $\lambda_{n1j}$ with their means over the replicates (although this was not done in \cite{luo2020order}). The procedure has two tuning parameters, $r_1$ and $s_1$, the latter of which directly reduces variation in the results and, hence, we suggest to use large values for it in practice. Suggestions on choosing the value of $r_1$ are formulated later in Section \ref{sec:examples}.

Our simulation results in Section \ref{sec:examples} suggest that $d_{n1}$ could be a consistent estimator of $d_1$ as $n \rightarrow \infty$, but proving this turned out to be less than straightforward. Namely, \cite{luo2020order} give sufficient conditions under which estimators such as $d_{n1}$ are consistent for the true dimension. Our scenario is easily checked to satisfy these assumptions apart from one, i.e., the requirement (12) in \cite{luo2020order} that the sequence of augmented matrices $S_{n1}^*$ is in a specific sense contiguous to Lebesgue measure. Now, a standard way of showing this would be to establish that $\sqrt{n} (S_{n1}^* - S_{1}^*)$, where $S_{1}^*$ is the limit of $S_{n1}^*$, converges in total variation to a non-singular normal distribution. However, by the classical result of Prohorov, see, e.g., Theorem 2.2 in \cite{bally2016asymptotic}, the convergence in total variation happens in the central limit theorem if and only if the corresponding sample estimators have non-trivial absolutely continuous components. Naturally, this is not the case with our count data model, implying that alternative strategies must be sought and, hence, we leave this question for future study.



\begin{figure}
    \centering
    \includegraphics[width = 1\textwidth]{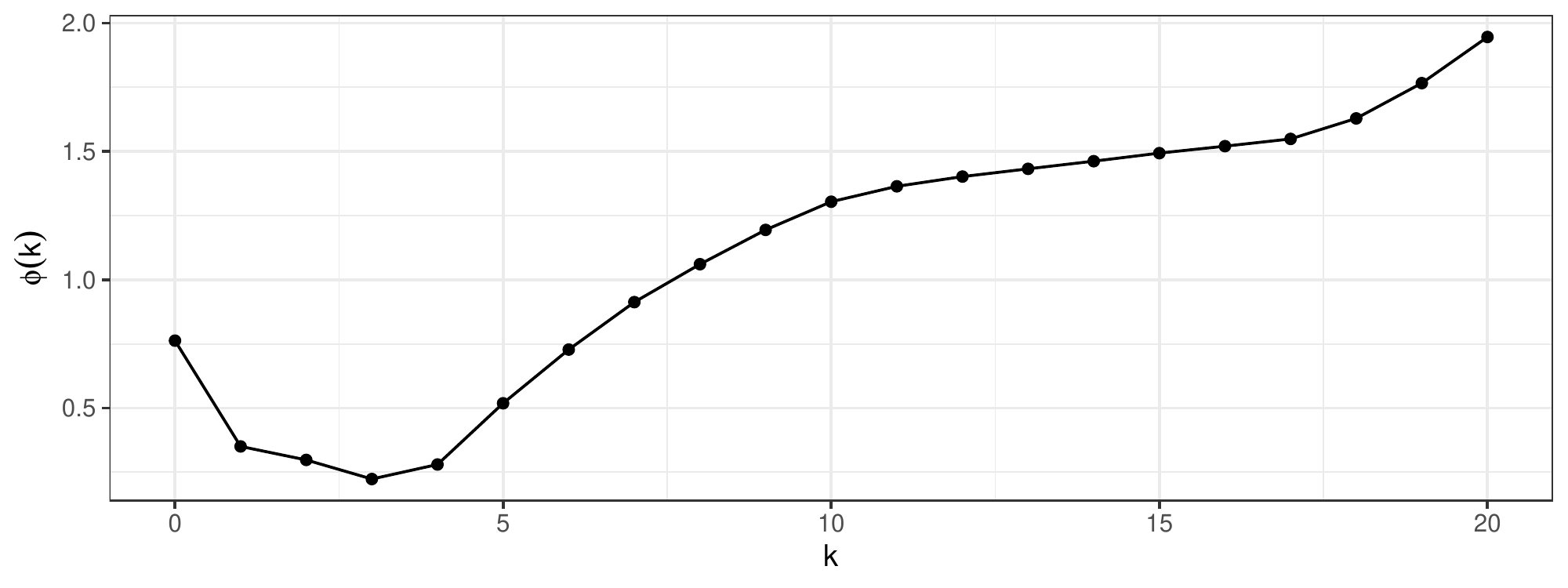}
    \caption{Plot of the map $k \mapsto \phi_{n1}(k)$ for the abundance data set. The minimum value is achieved at $k = 3$. }
    \label{fig:abundance_1}
\end{figure}

We next illustrate the dimension determination procedure in a simple special case of the model \eqref{eq:matrix_poisson_model_conditional} with $p_2 = 1$, meaning that the observations can be treated as $p_1$-dimensional vectors and it is sufficient to consider dimension estimation for the left-hand side of the model only (recall that our results are novel also in this special case). This, and all the examples to follow, were implemented in the language R \citep{Rbase}. The data set \texttt{microbialdata} in the R-package \texttt{gllvm} \citep{r_gllvm} consists of the abundances of $985$ bacteria species measured at $n = 56$ soil sample sites located either in Austria, Finland or Norway. For the purposes of this demonstration, we limit our attention to the subset of the $p_1 = 20$ most abundant species, defined as the ones having the least proportions of zero counts over all 56 sites. We now apply the proposed dimension determination procedure to this subset of the data with the choices $r_1 = \lceil p_1/5 \rceil = 4$ and $s_1 = 100$. The value of $r_1$ was chosen for its good success in the simulation results of \cite{luo2020order} and for $s_1$ we chose a large enough value to increase the stability of the estimate in the presence of the rather low sample size. The resulting function $\phi_{1n}$ is plotted in Figure \ref{fig:abundance_1} and leads to the estimate $d_{n1} = 3$. To assess whether the corresponding latent variables are meaningful, we estimate the model parameters, followed by the values of the first three latent variables for each of the $n = 56$ sites using the method of Section~\ref{subsec:variable_estimation}. For ease of presentation, we limit ourselves to the first two latent variables whose scatter plot is presented in Figure \ref{fig:abundance_2} (the colored plot markers), overlaid with the loadings of the first two latent variables, i.e., the first two columns of $U_{n1}$ (the numbers connected with dashed lines to the origin). The numbering of the $p_1 = 20$ species corresponds to their indexing in a specific taxonomy. The sites of the three regions appear to be rather well-separated in the first two latent variables and, to verify this finding, we fit a generalized linear Poisson latent variable model \citep{niku2017generalized} between the abundances and the region variable using the function \texttt{gllvm} in the R-package \texttt{gllvm}. Based on the coefficients estimates of the model, the sites in Austria are the most (the least) associated with the bacteria species 8, 4 (52, 184), the sites in Finland are the most (the least) associated with the species 7, 1 (4, 13) and the sites in Norway are the most (the least) associated with the species 64, 1242 (8, 70). Comparison of the previous with Figure \ref{fig:abundance_2} now reveals that the same pattern is indeed rather accurately reflected in the positioning of the loadings and the sites in our ``biplot'', showing that the latent variables managed to capture essential biological information. Further illustration of the methodology in the general case $p_2 > 1$ are given in Section~\ref{sec:examples}.

\begin{figure}
    \centering
    \includegraphics[width = 1\textwidth]{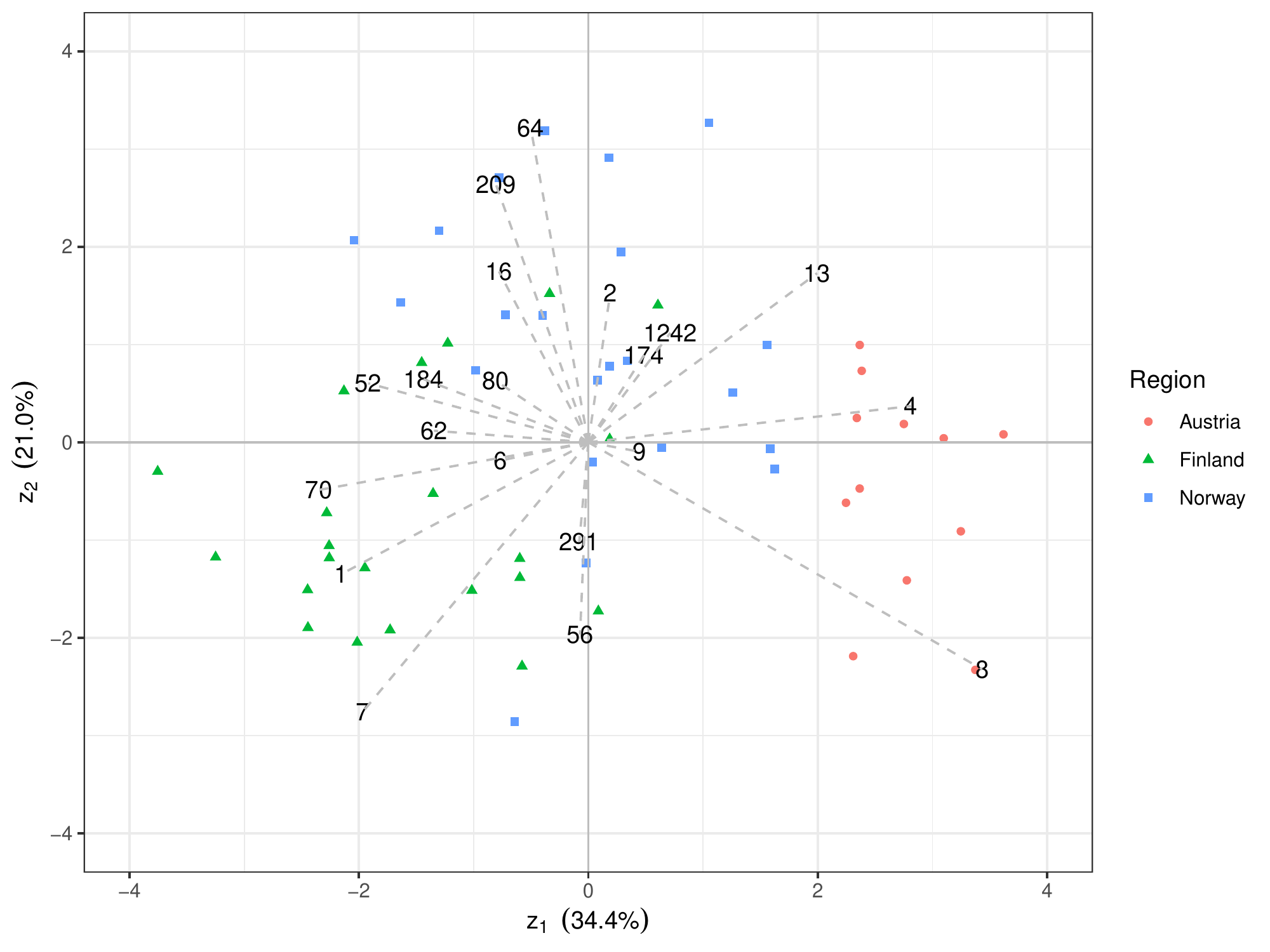}
    \caption{Scatter plot of the first two latent components for the abundance data set (the colored plot markers), overlaid with the corresponding loadings (the numbers connected with dashed lines to the origin). The numbering of the species represents their indexing in a specific taxonomy. The percentages on the axes denote the corresponding explained proportions of variance (ratios of the individual eigenvalues of $S_{n1}$ to their sum).}
    \label{fig:abundance_2}
\end{figure}

\section{Examples}\label{sec:examples}

\subsection{Dimension estimation in a simulation}

We next study the performance of the augmentation procedure in estimating the latent dimension. Simulations exploring the efficiency of our parameter estimators will be conducted later in Section \ref{sec:zero_inflated} once we have defined the zero-inflated version of the model. We take as our competitor the same augmentation estimator but applied to the Gaussian model \eqref{eq:matrix_normal_model}. This is essentially achieved by replacing the matrix $S_1$ in the augmentation procedure with $\mathrm{Cov}_1(X)$, see Section \ref{subsec:low_rank_models}, and similarly for $S_2$. This estimator, which can be seen as a ``naive'' data type-ignoring approach to matrix-valued count data has been recently studied in the context of image data in \cite{radojicic2021estimating}, see their work for more details.

We use two different sample sizes, $n = 100, 500$, and two different observed dimensions, either $(p_1, p_2) = (10, 5)$ (``Low dimension'') or $(p_1, p_2) = (50, 25)$ (``High dimension''). Two different models for the covariance parameters are considered: In {Model 1}, the first dimension is of rank one, $S_1 = \tau^2 U_1 \Lambda_1 U_1' = 1_{p_1} 1_{p_1}'$ ($1_{p_1}$ is the $p_1$-dimensional vector with all elements equal to one), and the second dimension is of rank five, $S_2 = \tau^2 U_2 \Lambda_2 U_2' = W E_5 W$, where $W$ is a uniformly random $p_2 \times p_2$ orthogonal matrix (drawn separately for every iteration of the study) and $E_5 \in \mathbb{R}^{p_2 \times p_2}$ is a diagonal matrix with its first five diagonal elements equal to one and the rest of them zero. In {Model 2}, both dimensions are taken to have rank five, with the covariance parameters being created analogously to that of the second dimension in Model 1. Data from every combination of the parameters and models are simulated 200 times and for each replicate we estimate the two dimensions with seven different approaches. These include our proposed augmentation approach with the numbers of augmentations $(r_1, r_2) = (p_1, p_2), (\lceil p_1/2 \rceil, \lceil p_2/2 \rceil), (\lceil p_1/5 \rceil, \lceil p_2/5 \rceil), (\lceil p_1/10 \rceil, \lceil p_2/10 \rceil), (1, 1)$ (denoted in the following by A1, A2, A3, A4, A5 respectively), where in each case we take the numbers of repetitions to be $s_1 = s_2 = 5$. In addition, we consider the Gaussian augmentation procedure as described in \cite{radojicic2021estimating} and implemented in the R-package \texttt{tensorBSS} \citep{rtensorBSS}, and having either $r = 1$ or $r = 5$ augmentations (denoted in the following by G1, G2), with both cases using $s = 5$ repetitions.

\begin{table}[t]
\small
\centering
\caption{Results of the dimension estimation study. The numbers refer to the percentages of correctly estimated dimensions in the different combinations of models and parameters. The results for the left-hand side dimension $d_1$ are denoted by L whereas R signifies the estimates for the right-hand side dimension~$d_2$.}
\label{tab:simu_2}
\begin{tabular}{c|c|c|p{0.44cm}p{0.44cm}|p{0.44cm}p{0.44cm}|p{0.44cm}p{0.44cm}|p{0.44cm}p{0.44cm}|p{0.44cm}p{0.44cm}|p{0.44cm}p{0.44cm}|p{0.44cm}p{0.44cm}}
  &  &  & \multicolumn{2}{|c|}{G1} & \multicolumn{2}{|c|}{G2} & \multicolumn{2}{|c|}{A1} & \multicolumn{2}{|c|}{A2} & \multicolumn{2}{|c|}{A3} & \multicolumn{2}{|c|}{A4} & \multicolumn{2}{|c|}{A5} \\
  \hline
 Dim. & Model & $n$ & L & R & L & R & L & R & L & R & L & R & L & R & L & R \\ 
  \hline
 \multirow{2}{*}{Low} & \multirow{2}{*}{1} & 100 & 98 & 96 & 100 & 83 & 100 & 76 & 100 & 95 & 100 & 100 & 100 & 100 & 100 & 100 \\ 
  &  & 500 & 100 & 100 & 100 & 100 & 100 & 100 & 100 & 100 & 100 & 100 & 100 & 100 & 100 & 100 \\
 \hline
 \multirow{2}{*}{Low} & \multirow{2}{*}{2} & 100 & 26 & 0 & 0 & 0 & 64 & 76 & 100 & 99 & 100 & 100 & 98 & 100 & 100 & 100 \\ 
  &  & 500 & 84 & 0 & 46 & 0 & 100 & 100 & 100 & 100 & 100 & 100 & 100 & 100 & 100 & 100 \\
 \hline
 \multirow{2}{*}{High} & \multirow{2}{*}{1} & 100 & 87 & 23 & 100 & 86 & 100 & 100 & 100 & 100 & 100 & 100 & 100 & 100 & 100 & 99 \\ 
  &  & 500 & 98 & 2 & 100 & 34 & 100 & 100 & 100 & 100 & 100 & 100 & 100 & 100 & 100 & 100 \\
 \hline
 \multirow{2}{*}{High} & \multirow{2}{*}{2} & 100 & 10 & 35 & 8 & 0 & 0 & 0 & 0 & 0 & 0 & 0 & 0 & 0 & 38 & 8 \\ 
  &  & 500 & 58 & 88 & 100 & 98 & 0 & 0 & 0 & 0 & 32 & 0 & 98 & 44 & 90 & 95 \\ 
   \hline
\end{tabular}
\end{table}

The rounded percentages of correctly estimated left and right dimensions over the 200 replicates are shown in Table \ref{tab:simu_2} where L and R refer to the left-hand side dimension $d_1$ and the right-hand side dimension $d_2$, respectively. Comparison of the methods A1 -- A5 shows that, almost without exception, using a smaller amount of augmentations $(r_1, r_2)$ is beneficial, an observation that matches with the results of the simulation study in \cite{luo2020order} (in the context of vectorial dimension reduction). None of A1--A5 is able to consistently estimate the dimensions in the high-dimensional version of Model 2 for $n = 100$, but, after increasing the sample size to $n = 500$, A5 achieved almost perfect results, making it the most consistent of the methods. Based on this, we suggest using the tuning parameter values $(r_1, r_2) = (1, 1)$ in practice. Also, interestingly, for $n = 100$, the method A5 has more difficulties in estimating $d_2$ (R) than $d_1$ (L) even though we have $p_1 > p_2$ and $d_1 = d_2$. Turning our attention to the Gaussian augmentations G1, G2, we observe that they work very consistently under some settings (low-dimensional Model 1) and badly underperform under some (low-dimensional Model~2). But most interestingly, G2 achieves the best performance out of all seven methods in the high-dimensional Model 2 with $n = 500$. Thus, while the Gaussian approach appears to be too unreliable to be used in practical situations of count data, it clearly does work extremely well in some specific situations, warranting more research in the future.

\subsection{Real data example}

Following in the spirit of our preliminary example in Section \ref{subsec:order_determination}, we next apply the proposed method to matrix-valued abundance data used earlier in \cite{frelat2017community}. The data consists of the relative abundances (rounded to nearest integer) of a total of $n = 65$ fish species in seven different so-called roundfish areas (RA 1 -- RA 7) in the North Sea, studied during the years 1985-2015 which we further divided into 6 time periods ($1985-1989, \ldots , 2005-2009, 2010-2015$). Thus, for the $i$th species, we have the $7 \times 6$ matrix $X_i$ whose $(j, k)$th element tells the relative abundance of that particular species in the area RA $j$ during the $k$th period.

In \cite{frelat2017community}, six biologically meaningful clusters (\textit{Southern}, \textit{Northern}, \textit{NW Increasing}, \textit{SE Increasing}, \textit{Increasing} and \textit{Decreasing}) were identified among the 65 species in the data using the combination of principal tensor analysis \citep{leibovici1998singular} and hierarchical clustering. As one of the primary practical objectives of dimension reduction is the discovery of structure (such as groups) in data, it seems reasonable to require that any successful method for reducing the dimension of the current data should be able to detect the previous six clusters, that were indeed in \cite{frelat2017community} deemed biologically internally consistent.

\begin{figure}[t]
    \centering
    \includegraphics[width = 1\textwidth]{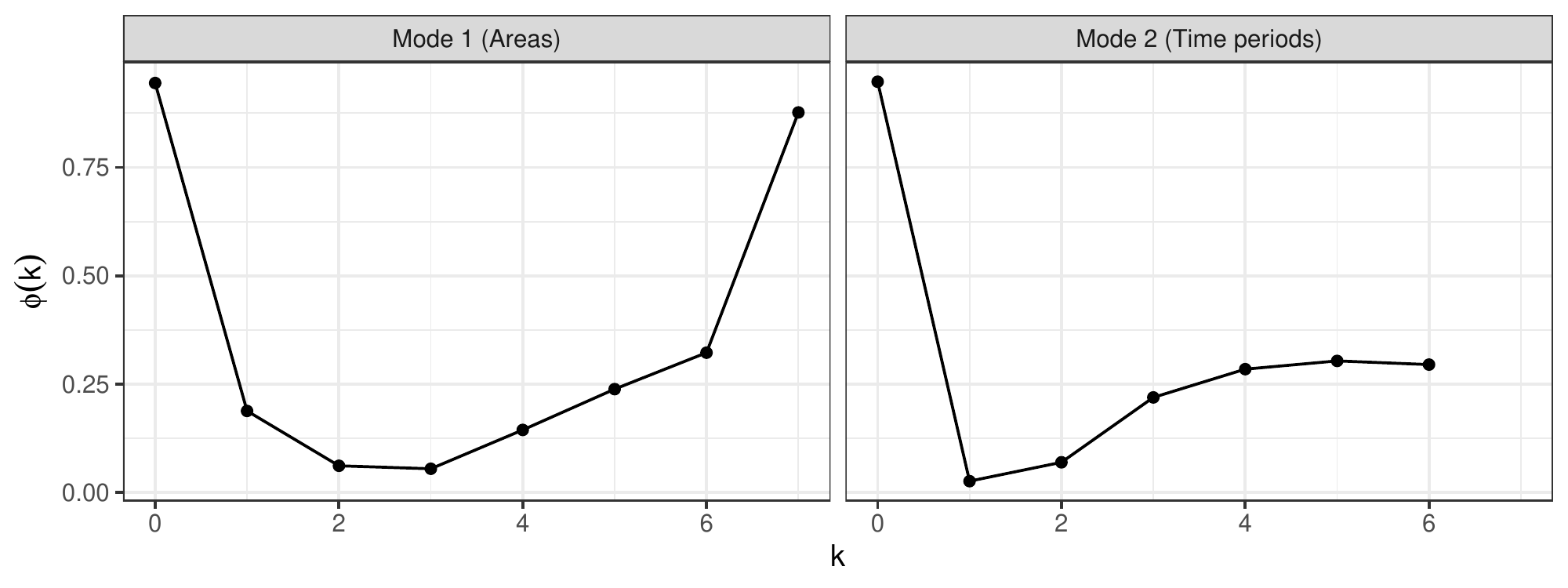}
    \caption{The two augmentation estimator curves for the matrix-valued abundance data. The left panel corresponds to the area dimension and the right one to the time dimension.}
    \label{fig:abundance_3}
\end{figure}

With the previous in mind, we next estimate our Poisson model for the data, starting with the dimension estimate curves produced by the augmentation procedure of Section \ref{subsec:order_determination} and shown in Figure \ref{fig:abundance_3} (based on the results of the simulation study, we used $ (r_1, r_2) = (1, 1)$ and $(s_1, s_2) = (100, 100)$). Based on the two plots (the left panel corresponding to the areas and the right panel to the time periods), we conclude that the latent dimension of the time periods is clearly one. For the areas, while the minimum of the curve is achieved at three dimensions, also two seems to be a reasonable option. In order not to lose any information, we retain a total of three principal components, $z_{i,11}, z_{i,21}, z_{i,31}$, estimated with the algorithm in Section \ref{subsec:variable_estimation} (of which three observations failed to converge and are not shown in the subsequent plots). Examination of the corresponding loading vectors (the columns of $U_{n1}$ and $U_{n2}$) then reveals that in both modes the first loading vector has roughly constant elements, indicating that the corresponding PC $z_{i, 11}$ simply measures the overall abundances of the species (the absolute correlation between $z_{i, 11}$ and the average abundances of the species in the 42 area-time combinations is $0.55$).


\begin{figure}[t]
    \centering
    \includegraphics[width = 1\textwidth]{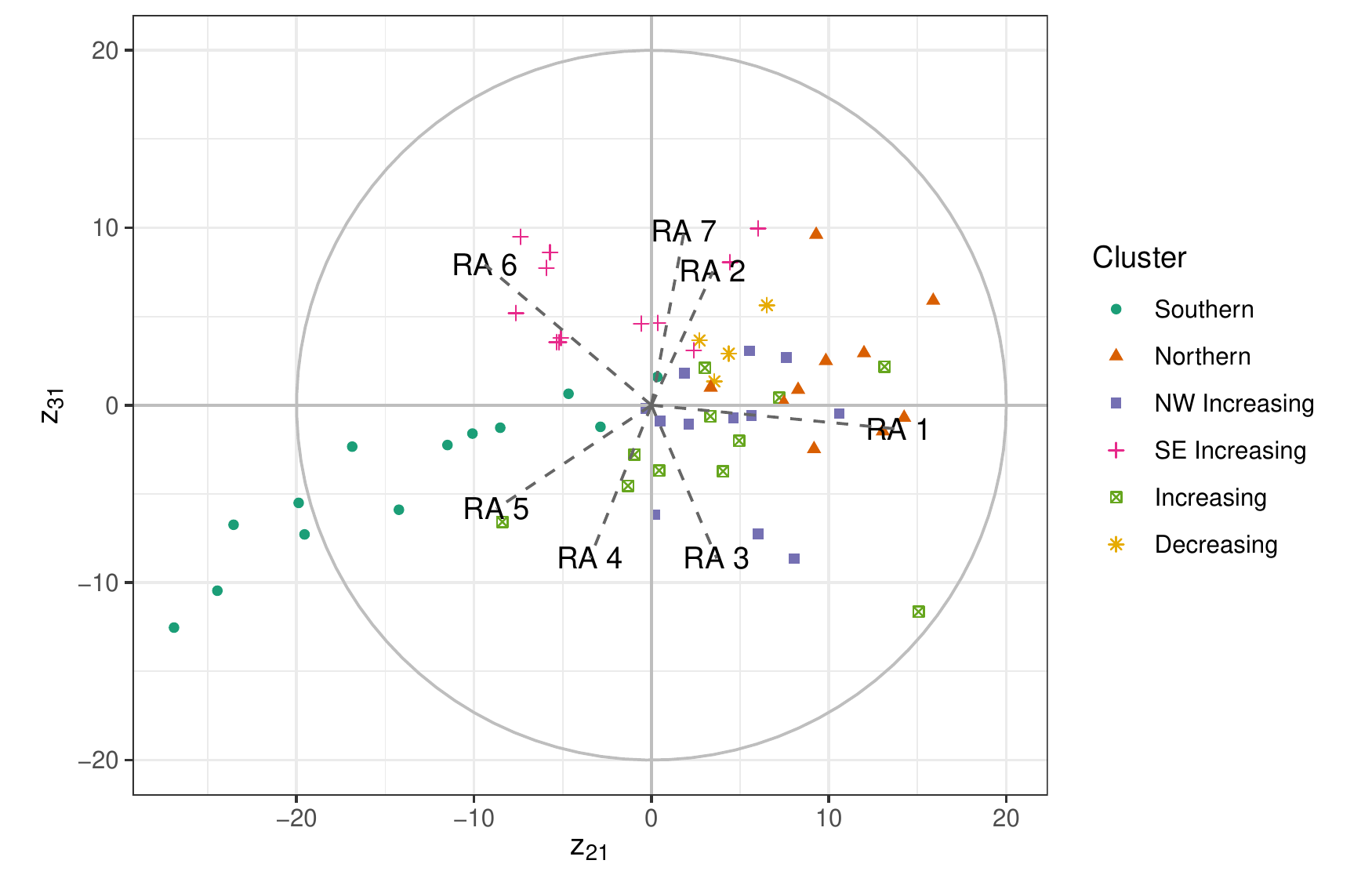}
    \caption{The scatter plot of the principal components $z_{i, 21}$ and $z_{i, 31}$ for the matrix-valued abundance data. Overlaid as dashed lines are the corresponding loadings of the seven roundfish areas and the coloring corresponds to the clustering in \cite{frelat2017community}}.
    \label{fig:abundance_5}
\end{figure}

As our interests lie deeper than in the aggregate abundances, we next ignore the PC $z_{i,11}$ and plot the remaining two, $z_{i,21}$ and $z_{i,31}$, in a bivariate scatter plot, depicted in Figure~\ref{fig:abundance_5}. The coloring/shapes in the plot correspond to the six clusters identified in\cite{frelat2017community} and we observe that they are indeed well-separated in the plot, with the exception of \textit{NW Increasing} and \textit{Increasing}. \cite{frelat2017community} actually remark that the \textit{NW Increasing} is a ``very heterogeneous cluster'' and, by Figure~3 in \cite{frelat2017community}, if their hierarchical clustering had been stopped at five instead of six clusters, it is precisely the clusters \textit{NW Increasing} and \textit{Increasing} that would have been joined next. Thus, we conclude that the principal components in Figure \ref{fig:abundance_5} have quite successfully managed to capture the group structure of the data. Overlaid in Figure \ref{fig:abundance_5} as dashed lines are also the corresponding area loadings (given by the second and third column of $U_{n1}$) for the seven roundfish areas. Comparison to Figure 4 in \cite{frelat2017community} reveals that these rather accurately capture the division of the clusters in the seven areas (for example, the \textit{Southern} cluster is heavily concentrated in RA 5, as suggested by the aligning of the corresponding group and dashed line in our Figure~\ref{fig:abundance_5}). We also observe that the loadings of the areas manage to capture some geographical information, as, after reflecting w.r.t. the $x$-axis and rotating clock-wise by 90 degrees, the loading map in Figure \ref{fig:abundance_5} matches approximately with the actual map of the seven areas in the Northern sea, see Figure 1 in \cite{frelat2017community}. Note that we have not included the loadings of the time dimension in Figure~\ref{fig:abundance_5} as it is one-dimensional for the PCs (all of $z_{i, 11}$, $z_{i, 21}$, $z_{i, 31}$ have the same column coordinate). Besides, the corresponding dimension was already earlier deemed as uninteresting.

\begin{figure}
    \centering
    \includegraphics[width = 1\textwidth]{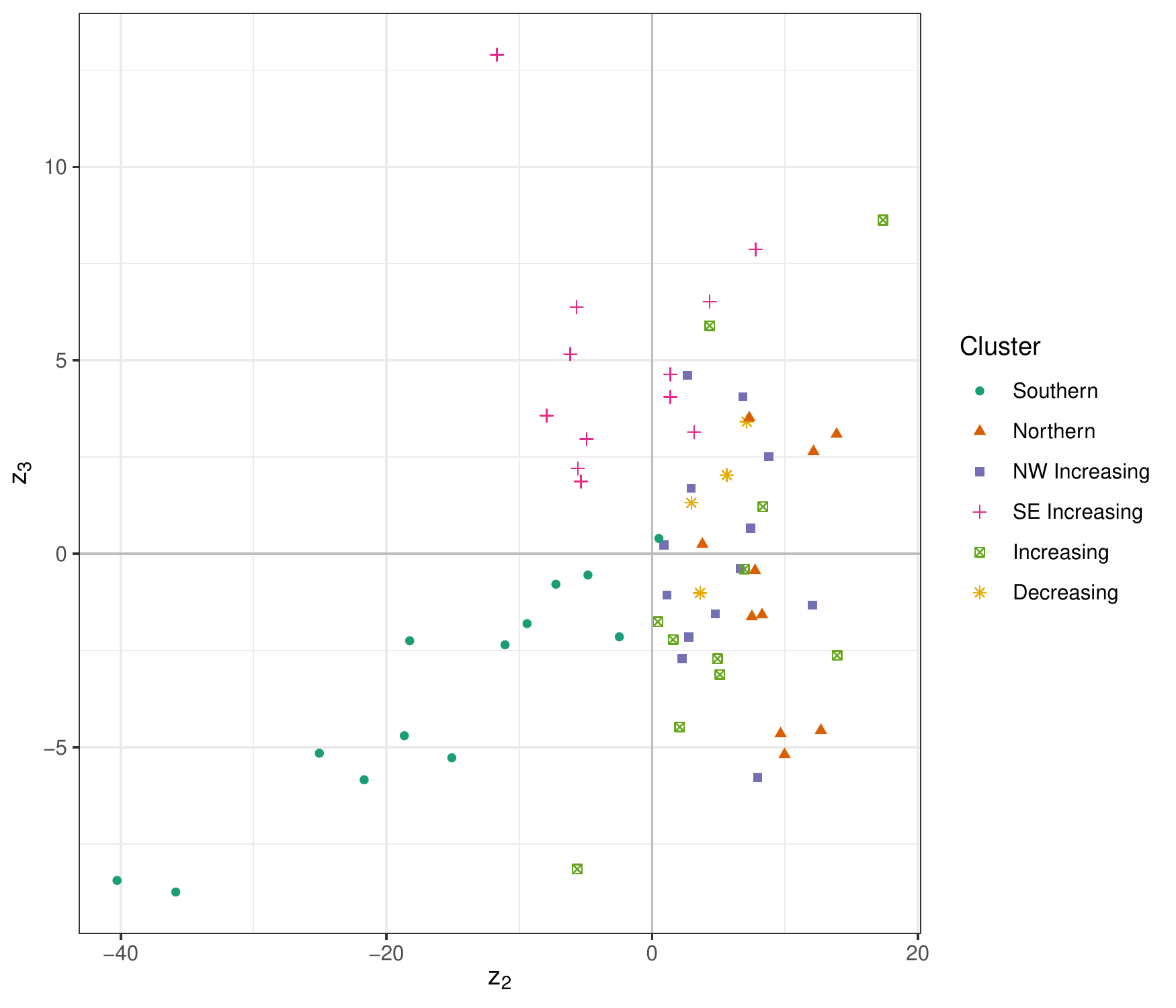}
    \caption{The scatter plot of the second and third PCs extracted from the vectorized matrix abundance data. The coloring corresponds to the clustering in \cite{frelat2017community}.}
    \label{fig:abundance_6}
\end{figure}

Before concluding, we still briefly compare the obtained results to those of two natural competitors: our proposed Poisson PCA procedure applied to vectorized observations (that is, each $7 \times 6$ matrix $X_i$ is replaced with a 42-dimensional vector $x_i$) and the popular tensor decomposition known as \textit{higher order singular value decomposition} (HOSVD) \citep{de2000multilinear} and implemented as the function \texttt{tPCA} in the R-package \texttt{tensorBSS} \citep{rtensorBSS}. The augmentation plot (not shown here) for the vectorial version of our proposed method reveals that the latent dimension is three. Similarly to the matrix model, the first PC has again almost constant loadings for all 42 variables and in Figure \ref{fig:abundance_6} we have visualized the second and the third PC. The plot clearly manages to separate the \textit{Southern} and \textit{SE Increasing} clusters but the remaining four are left more or less overlapping. In addition, incorporation of any loading information to Figure \ref{fig:abundance_6} would be difficult as we lost the distinction between the row and column variables in the vectorization. Note also that, as described in the introduction, our proposed matrix-model is actually a submodel of its vectorial counterpart and, thus, any PCs found under the matrix model are also possible to discover under the vector model. Hence, the fact that the matrix model actually performed better than its more general vector version leads us to conclude that the ``regularization'' offered by the former was indeed beneficial in practice. Finally, we also applied HOSVD to the data, but the obtained leading PCs were heavily dominated by outliers and, thus, its results are not shown here. 

\section{Modification for sparse data}\label{sec:zero_inflated}

It is common for observed count data to contain more zero observations than the standard distributional models, such as the Poisson distribution, predict. This phenomenon is known as zero-inflation (or sparsity) and occurs, in particular, when the data is a mixture from two generating processes, of which one outputs only zero observations. For example, in abundance data, it could be that a certain species is biologically incompatible with a certain environment, leading automatically to a zero count. Motivated by this, in this section we present an ``experimental'' zero-inflated version of the model \eqref{eq:matrix_poisson_model_conditional} where the Poisson distribution is replaced with the zero-inflated Poisson distribution. Namely, we assume that the elements $x_{jk}$ of the observed $p_1 \times p_2$ random matrix $X$ are generated as $x_{jk} = b_{jk} y_{jk}$ where $b_{jk} \sim \mathrm{Ber}(\pi_{jk})$, $\pi_{jk} \in (0, 1]$, the random variables $y_{jk}$, $j=1,\ldots,p_1$, $k=1,\ldots,p_2$, are conditionally independent given $Z$ and satisfy,
\begin{align}\label{eq:matrix_poisson_model_conditional_sparse}
    y_{jk} \mid Z \sim \mathrm{Po}[ \exp \{ \mu_{jk} + (U_1 Z U_2')_{jk} \} ],
\end{align}
and $Z$, $\mu$, $U_1$ and $U_2$ are as in the model \eqref{eq:matrix_poisson_model_conditional}. Moreover, we assume that the mixture proportion variables $b_{jk}$, $j = 1, \ldots, p_1$, $k = 1, \ldots, p_2$, are independent of each other, the $y_{jk}$ and $Z$. Thus, the zero-inflated model \eqref{eq:matrix_poisson_model_conditional_sparse} is otherwise equivalent to its regular variant \eqref{eq:matrix_poisson_model_conditional}, but with the exception that each generated count is observed with the probability $\pi_{jk}$, and replaced with a zero otherwise.

The introduction of additional parameters now implies that the first and second-order quantities we used for the method of moments in Section \ref{subsec:parameter_estimation} must be supplemented with additional statistics for us to be able to estimate the parameters of the zero-inflated model. A natural choice would be to use the observed counts of zeroes for each cell but it turns out that their population versions lead to intractable integrals. Hence, we instead resort to third-order quantities. Indeed, using the techniques of the proof of Lemma \ref{lem:covariance_estimators}, it is straightforward to show that,
\begin{align}\label{eq:pi_estimator}
    \pi_{jk} = \frac{  \{ \mathrm{E} ( x_{j k} ) \}^3 \mathrm{E} \{ x_{j k} (x_{j k} - 1) (x_{j k} - 2) \} }{ [\mathrm{E} \{ x_{j k} (x_{j k} - 1) \}]^3},
\end{align}
allowing the estimation of $\pi_{jk}$. Let then $s_{1, jk}$, $j \neq k$, be defined as in \eqref{eq:left_side_s_elements} and let the diagonal elements $s_{1, jj}$ be defined as
\begin{align*}
    s_{1, jj} := \frac{1}{p_2} \sum_{\ell = 1}^{p_2} \log \left[ \pi_{j \ell} \frac{\mathrm{E} \{ x_{j \ell} (x_{j \ell} - 1) \} }{ \{ \mathrm{E}(x_{j \ell}) \}^2} \right].
\end{align*}
Defining also the elements $s_{2, jk}$ of the matrix $S_2$ analogously, we see that $ S_1 = \tau^2 U_1 \Lambda_1 U_1'$ and $S_2 = \tau^2 U_2 \Lambda_2 U_2'$, again allowing the estimation of $\tau^2$ through the identity $\tau^2 = \mathrm{tr}(S_1)/(2 p_1) + \mathrm{tr}(S_2)/(2 p_2)$. Finally, arguing again as in the proof of Lemma \ref{lem:covariance_estimators}, the elements $\mu_{jk}$ of the mean matrix $\mu$ are seen to satisfy 
\begin{align*}
    \mu_{j k} = -\frac{5}{2} \log \mathrm{E}(x_{jk}) + 4 \log \mathrm{E} \{ x_{j k} (x_{j k} - 1) \} - \frac{3}2{} \log \mathrm{E} \{ x_{j k} (x_{j k} - 1) (x_{j k} - 2) \}.
\end{align*}
Let next $\Pi$ be the $p_1 \times p_2$ matrix of the elements $\pi_{jk}$ and denote the sample estimators of the model parameters by $S_{n1}$, $S_{n2}$, $t_{n}^2$, $M_n$ and $P_n$, where the first four are as in Section \ref{subsec:parameter_estimation} (with the modifications mentioned above) and $P_n$ is the sample estimator of the matrix $\Pi$, containing the sample estimates of \eqref{eq:pi_estimator}. The next theorem says that the estimators are, similar to their counterparts in Section \ref{subsec:parameter_estimation}, root-$n$ consistent. Its proof is exactly analogous to that of Theorem~\ref{theo:asymptotics}, causing us to omit it. 

\begin{theorem}\label{theo:asymptotics_2}
    Under the model \eqref{eq:matrix_poisson_model_conditional_sparse}, each of the deviations,
    \begin{align*}
        S_{n1} - S_1, \quad S_{n2} - S_2, \quad t_n^2 - \tau^2 \quad M_n - \mu \quad \mbox{and} \quad P_n - \Pi,
    \end{align*}
    is of the order $\mathcal{O}_p(1/\sqrt{n})$ as $n \rightarrow \infty$.
\end{theorem}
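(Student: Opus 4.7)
The plan is to mimic the argument used for Theorem~\ref{theo:asymptotics}: reduce each of the five estimators to a smooth function of a common collection of empirical moments, invoke the central limit theorem for those moments, and finish with the delta method. The only genuine novelty compared with Theorem~\ref{theo:asymptotics} is that the matrix $\Pi$ forces us to track third-order factorial moments in addition to the first- and second-order ones.

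First I would note that every entry of $S_{n1}$, $S_{n2}$, $t_n^2$, $M_n$ and $P_n$ is a fixed smooth function of the collection of empirical means
\[
\bar a_{jk} = \frac{1}{n}\sum_{i=1}^n x_{i,jk}, \quad
\bar b_{jk} = \frac{1}{n}\sum_{i=1}^n x_{i,jk}(x_{i,jk}-1), \quad
\bar c_{jk} = \frac{1}{n}\sum_{i=1}^n x_{i,jk}(x_{i,jk}-1)(x_{i,jk}-2),
\]
obtained by composing logarithms, ratios, sums and products, exactly as prescribed by \eqref{eq:pi_estimator}, the modified definition of $s_{1,jj}$, the analogous formula for $s_{2,jj}$, the definition of $t_n^2$, and the explicit formula for $\mu_{jk}$ given in the text. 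Under the zero-inflated model \eqref{eq:matrix_poisson_model_conditional_sparse}, each $x_{i,jk}$ is, conditional on $Z_i$ and $b_{i,jk}$, a Bernoulli-thinned Poisson variable with a log-normal mean; since the log-normal distribution has finite moments of all orders, so does $x_{i,jk}$, in particular enough to guarantee that the joint vector $(\bar a_{jk},\bar b_{jk},\bar c_{jk})_{j,k}$ satisfies the multivariate CLT. Hence its deviation from the population counterpart is $\mathcal{O}_p(1/\sqrt{n})$.

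Next I would check that the population smooth maps are differentiable at the true argument, which comes down to verifying that no denominator vanishes and no logarithm is evaluated outside $(0,\infty)$. Using the conditional Poisson representation and $\pi_{jk}>0$, one has, for example, $\mathrm{E}(x_{jk})= \pi_{jk}\,\mathrm{E}[\exp\{\mu_{jk}+(U_1 Z U_2')_{jk}\}]$ and analogous strictly positive expressions for $\mathrm{E}\{x_{jk}(x_{jk}-1)\}$ and $\mathrm{E}\{x_{jk}(x_{jk}-1)(x_{jk}-2)\}$; positivity of the arguments of the logarithms inside $s_{1,jk}$ and $s_{2,jk}$ follows along the same lines as in the proof of Lemma~\ref{lem:covariance_estimators}. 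Once this is in place, a componentwise application of the delta method promotes the $\mathcal{O}_p(1/\sqrt{n})$ rate of the empirical moment vector to the same rate for every entry of $S_{n1}-S_1$, $S_{n2}-S_2$, $t_n^2-\tau^2$, $M_n-\mu$ and $P_n-\Pi$; choosing any fixed matrix norm then yields the claim.

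I expect the main obstacle to be purely bookkeeping, namely propagating the delta-method bound through the long chain of logarithms, ratios and cubed terms that appear in the formula for $\pi_{jk}$ and in the redefined diagonal entries of $S_1$. Because the number of compositions is finite and each building block is $C^\infty$ on its domain, no new probabilistic ingredient beyond those already invoked in the proof of Theorem~\ref{theo:asymptotics} is required; the only additional element is that the CLT is now applied to a strictly larger vector of empirical factorial moments including $\bar c_{jk}$.
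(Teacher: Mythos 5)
Your proposal is correct and follows essentially the same route as the paper, which omits the proof precisely because it is ``exactly analogous'' to that of Theorem \ref{theo:asymptotics}: a multivariate CLT for the empirical factorial moments (now including the third-order ones needed for $P_n$ and the modified $M_n$ and diagonal entries of $S_{n1}$, $S_{n2}$), followed by the delta method after verifying the positivity of the quantities entering the logarithms and denominators, which you correctly justify via $\pi_{jk}>0$ and the conditional Poisson--log-normal representation.
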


While Theorem \ref{theo:asymptotics_2} guarantees that the zero-inflated estimators enjoy the convergence rate of their regular counterparts, their efficiency still leaves something to be desired. This results from the use of the third moments in the parameter estimates of the zero-inflated model, the higher moments introducing additional variance to the estimators. Besides the inflated variance, another practical issue with the estimator $P_n$ is that there is no guarantee that the estimators $p_{njk}$ of the $\pi_{jk}$ should lie in the interval $(0, 1]$ for any finite $n$, and in practice we found it convenient to threshold the estimates to the interval $[0.05, 1]$ which, of course, requires the assumption that the true probabilities satisfy $\pi_{jk} \geq 0.05$.

To further study this loss in efficiency, we conducted the following simulation experiment: The $4 \times 3$ observations $X_1, \ldots , X_n$ were generated from the zero-inflated model \eqref{eq:matrix_poisson_model_conditional_sparse} either with $S_1 = \tau^2 U_1 \Lambda_1 U_1' = I_4$, $S_2 = \tau^2 U_2 \Lambda_2 U_2' = I_3$ (the full rank model) or with $U_1 \Lambda_1 U_1' = 1_4 1_4'$, $U_2 \Lambda_2 U_2' = 1_3 1_3'$ (the low-rank model) where $1_p$ denotes the $p$-dimensional vector consisting solely of ones. In both cases we took $\mu = 0$. We considered several levels of zero-inflation, $\Pi = \pi 1_4 1_3'$, for $\pi = 1, 0.75, 0.5, 0.25$. Finally, we took two different sample sizes, $n = 500, 1000$, and each combination of the previous simulation settings was independently replicated 1000 times. In each of the replicates both the regular estimates from Section \ref{subsec:parameter_estimation} and the zero-inflated estimates of the parameters were computed, and for each estimate we measured its distance from the true parameter value in the Euclidean norm. No thresholding was used in the zero-inflated estimation of $P_n$. The average distances are shown for the full rank model in Table \ref{tab:simu_sparse_1} and for the low-rank model in Table \ref{tab:simu_sparse_2}.  In both tables, the column \textit{Est} describes which estimators were used, either the regular ones from Section \ref{subsec:parameter_estimation} (R) or the zero-inflated ones of this section (Z).

\begin{table}[t]
\centering
\caption{Results of the simulation study for the full rank model. The numbers give the average Euclidean distances of the parameter estimates from their true values. R refers to the regular estimators from Section \ref{subsec:parameter_estimation} and Z to the zero-inflated estimators.}
\label{tab:simu_sparse_1}
\begin{tabular}{cc|cccc|cccc|}
& & \multicolumn{4}{|c|}{$n = 500$} & \multicolumn{4}{|c|}{$n = 1000$} \\
  \hline
 $\pi$ & Est & $\mu$ & $S_1$ & $S_2$ & $\pi 1_4 1_3'$ & $\mu$ & $S_1$ & $S_2$ & $\pi 1_4 1_3'$ \\ 
  \hline
 \multirow{2}{*}{1.00} & R & 0.32 & 0.29 & 0.20 & - & 0.24 & 0.22 & 0.15 & -  \\ 
  & Z & 1.48 & 0.63 & 0.51 & 0.78 & 1.26 & 0.50 & 0.41 & 0.65 \\ 
 \hline
 \multirow{2}{*}{0.75} & R & 1.48 & 0.60 & 0.48 & - & 1.48 & 0.59 & 0.49 & -  \\ 
  & Z & 1.59 & 0.71 & 0.58 & 0.63 & 1.34 & 0.56 & 0.45 & 0.54 \\
  \hline
 \multirow{2}{*}{0.50} & R & 3.55 & 1.37 & 1.15 & - & 3.57 & 1.37 & 1.16 & -  \\ 
  & Z & 1.76 & 0.89 & 0.70 & 0.46 & 1.47 & 0.69 & 0.54 & 0.39 \\
  \hline
 \multirow{2}{*}{0.25} & R & 7.10 & 2.82 & 2.34 & - & 7.13 & 2.76 & 2.35 & -  \\ 
  & Z & 2.15 & 1.44 & 1.03 & 0.27 & 1.79 & 1.06 & 0.79 & 0.23 \\ 
   \hline
\end{tabular}
\end{table}

The results are very similar for both models, so we focus only on Table \ref{tab:simu_sparse_1} in the following. The table shows that decreasing the value of $\pi$ from 1.00 to 0.25 has the expected effects on the two sets of estimators: the regular estimators (R) are consistent only when no zero-inflation is present, i.e., $\pi = 1.00$, and they quickly deteriorate as $\pi$ decreases (visible in the fact that increasing the sample size $n$ does no good for them in cases other than $\pi = 1.00$). Whereas, while the estimation error does increase also for the zero-inflated estimator (Z) when $\pi$ is decreased, it is also the case that doubling $n$ from 500 to 1000 notably improves its performance, as expected. Additionally, we see that the matrix $\pi 1_4 1_3'$ is actually easier to estimate when $\pi$ is small which is probably a consequence of the finite sample performance of its estimator \eqref{eq:pi_estimator} (our experiments revealed that it has a strong downward bias). 

\begin{table}[t]
\centering
\caption{Results of the simulation study for the low-rank model. The numbers give the average Euclidean distances of the parameter estimates from their true values. R refers to the regular estimators from Section \ref{subsec:parameter_estimation} and Z to the zero-inflated estimators.}
\label{tab:simu_sparse_2}
\begin{tabular}{cc|cccc|cccc|}
& & \multicolumn{4}{|c|}{$n = 500$} & \multicolumn{4}{|c|}{$n = 1000$} \\
  \hline
 $\pi$ & Est & $\mu$ & $S_1$ & $S_2$ & $\pi 1_4 1_3'$ & $\mu$ & $S_1$ & $S_2$ & $\pi 1_4 1_3'$ \\ 
  \hline
 \multirow{2}{*}{1.00} & R & 0.29 & 0.53 & 0.39 & - & 0.21 & 0.41 & 0.31 & -  \\ 
  & Z & 1.42 & 0.81 & 0.65 & 0.75 & 1.21 & 0.66 & 0.53 & 0.62 \\
 \hline
 \multirow{2}{*}{0.75} & R & 1.47 & 0.82 & 0.63 & - & 1.47 & 0.73 & 0.57 & -  \\ 
  & Z & 1.52 & 0.92 & 0.72 & 0.61 & 1.29 & 0.74 & 0.58 & 0.53 \\
 \hline
 \multirow{2}{*}{0.50} & R & 3.54 & 1.54 & 1.24 & - & 3.57 & 1.48 & 1.22 & -  \\ 
  & Z & 1.74 & 1.11 & 0.83 & 0.46 & 1.45 & 0.88 & 0.66 & 0.39 \\
 \hline
 \multirow{2}{*}{0.25} & R & 7.11 & 3.09 & 2.49 & - & 7.14 & 2.93 & 2.43 & -  \\ 
  & Z & 2.11 & 1.87 & 1.30 & 0.27 & 1.75 & 1.39 & 0.96 & 0.23 \\ 
   \hline
\end{tabular}
\end{table}

Finally, we still discuss the estimation of the latent components under the zero-inflated model. The optimization strategy we used in Section \ref{subsec:variable_estimation} is no longer viable as the presence of the zero-inflation renders the logarithmic conditional density $\ell(z \mid x)$ non-concave. Namely, arguing as in the proof of Lemma~\ref{theo:logarithmic_conditional_density}, it is straightforwardly shown that $\ell(z | x)$ has the form,
\begin{align}\label{eq:conditional_density_zero_inflation}
	C + \sum_{j = 1}^p ( 1 - e_{jj} ) \log \{ \pi_j \exp(-h_j) + 1 - \pi_j \} + x' E U z - 1' E h  - \frac{1}{2 \tau^2} z' \Lambda^{-1} z,
\end{align}
where we have used the notation of Section \ref{subsec:variable_estimation}, $h := \exp ( m + U z )$, $\pi = (\pi_j)$ is taken to be the vectorization of the corresponding matrix $\Pi$, the constant $C$ does not depend on $z$ and $E$ is a $p \times p$ diagonal matrix whose $j$th diagonal element $e_{jj}$ is one if the corresponding element of the observation $x$ is non-zero, and zero otherwise. The second term in the expression \eqref{eq:conditional_density_zero_inflation} is what causes the non-concavity and, consequently, we instead use the Nelder-Mead method as implemented in the function \texttt{optimx} in the R-package \texttt{optimx} \citep{nash2011unifying} to maximize $\ell(z | x)$ for each of the observed $x_1, \ldots , x_n$. The obtained PCs $z_1, \ldots , z_n$ again need to be centered, as in Section \ref{subsec:variable_estimation}.

We next tested the estimation procedure by applying it to the matrix-valued abundance data we used in Section \ref{sec:examples}. Indeed, 52.6\% of all entries in the data are zero, implying that the assumption of zero-inflated data could be warranted. Applying the augmentation estimator to the data reveals (not shown here) that the latent column dimension is one and that the latent row dimension is one, or possibly two and, hence, we continue by extracting the principal components $z_{i, 11}$ and $z_{i, 21}$ from the data. The scatter plot of the components, with colors depicting the clusters described in Section \ref{sec:examples}, is shown in Figure \ref{fig:abundance_7} and clearly fails to sufficiently capture the group information in the data (only the \textit{Southern} cluster can be said to be somewhat separated from the rest). We conjecture that the reason for this is simply that the low sample size of the data is not sufficient to accommodate the zero-inflated model. Indeed, the large quantity of zero observations implies that only a certain fraction of the $n = 65$ observations came from the Poisson part of the model, leaving its parameter estimates inaccurate. As such, alternative ways should be pursued to incorporate the sparseness of the data in low-sample scenarios such as this, see also the discussion in Section \ref{sec:discussion}. This is, however, beyond the scope of the current work.

\begin{figure}
    \centering
    \includegraphics[width = 1\textwidth]{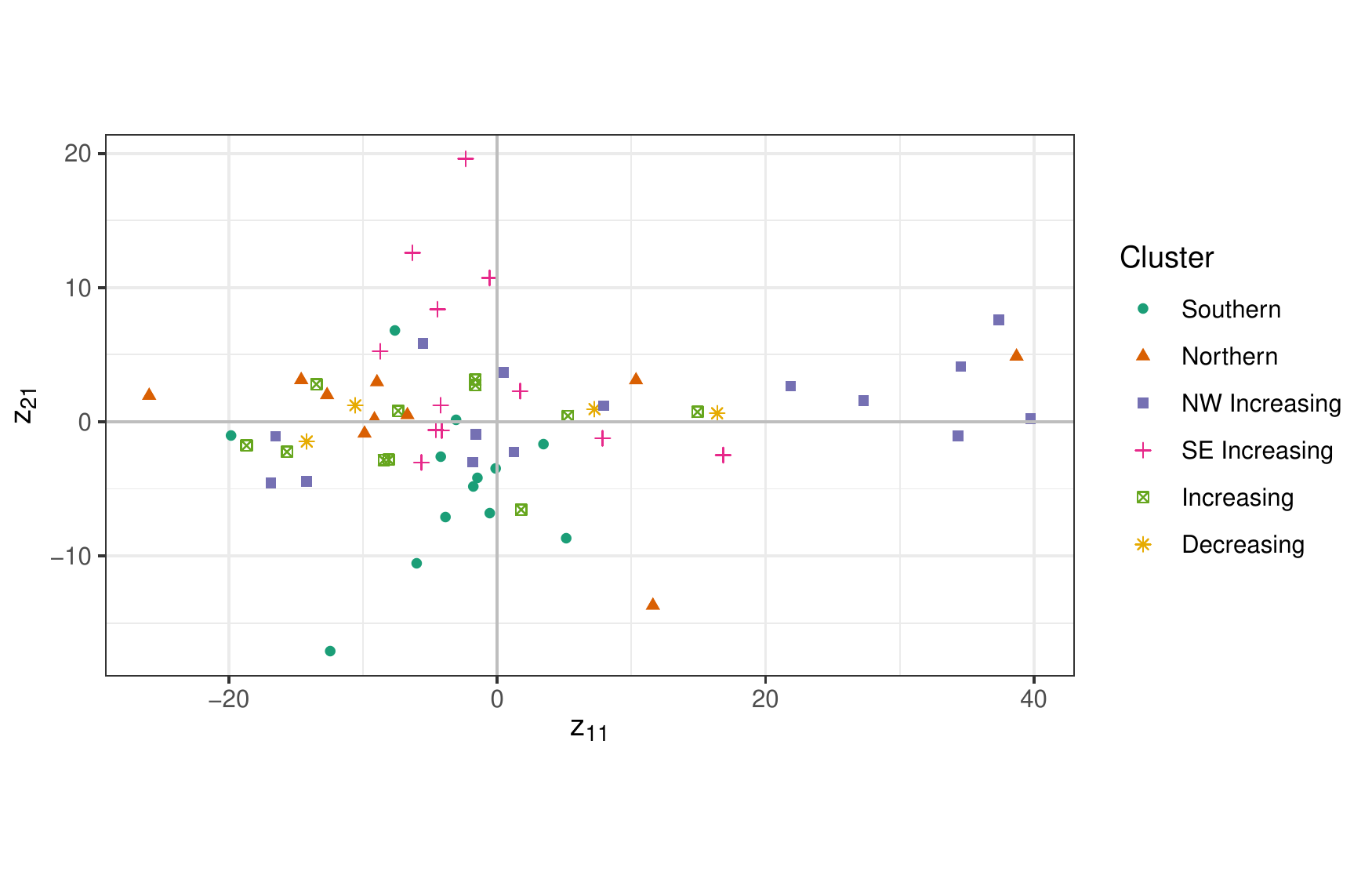}
    \caption{The scatter plot of the principal components $z_{i, 11}$ and $z_{i, 21}$ extracted from the matrix-valued abundance data under the assumption of the zero-inflated model.}
    \label{fig:abundance_7}
\end{figure}

\section{Discussion}\label{sec:discussion}

In this work, we proposed a latent variable model for data where the observations are matrices of counts. In addition to developing estimators for the model parameters and the principal components, we also proposed an efficient procedure for estimating the latent dimensions of the data.

The zero-inflated extension of the model proposed in Section \ref{sec:zero_inflated} was, while theoretically sound, observed to require rather large sample sizes to be useful in practice and thus more research is needed to obtain feasible alternatives that can accommodate also data with smaller $n$. Practicality aside, note that this requirement for larger sample sizes is rather natural in that the model~\eqref{eq:matrix_poisson_model_conditional_sparse} for sparse data has more parameters to estimate than the regular model~\eqref{eq:matrix_poisson_model_conditional}. The situation should also not be confused with that of a sparse \textit{model} (such as assumed in LASSO) where one has actually less effective parameters to estimate (due to the structural constraints imposed by the sparsity) and lower sample sizes suffice.

A natural continuation to the current work would be to extend the results to apply also to count-valued tensors (higher-order counterparts of matrices). Indeed, we expect that this could be rather straightforwardly carried out through the concept of tensor flattening, see  \cite{virta2017independent}, for a detailed derivation of a particular dimension reduction procedure, first for matrices and then to general tensors through the use of flattening. However, in the current work, we decided to limit ourselves to matrix data as: (i) examples of higher-order tensorial count data are still rather rare and, more importantly, (ii) the presentation of the theory is considerably less notationally intensive in the matrix case (cf. the two approaches in \cite{virta2017independent}).

Finally, as observed in Section \ref{sec:examples}, the Gaussian version of the augmentation procedure turned out to perform remarkably well in dimension estimation under the Poisson model, even though it quite severely violates the model assumptions. This interesting fact thus also warrants more study.





\appendix

\section{Limiting distribution of $S_1$ for $p_1 = p_2 = 1$}\label{sec:limiting_distribution}

For convenience, we introduce the following notation more suited to the current one-dimensional case: Let $z \sim \mathcal{N}(0, 1)$ and let $x \mid z \sim \mathrm{Po}\{ \exp(\mu + \sigma z) \}$. Moreover, with $x_1, \ldots , x_n$ denoting a random sample from the previous model, we let $m_{n1} := (1/n) \sum_{i=1}^n x_i $, $m_{n2} := (1/n) \sum_{i=1}^n x_i(x_i - 1)$, $m_1 := E(x)$ and $m_2 := E\{ x(x - 1) \}$. Consequently, our objective is to find the limiting distribution of
\begin{align*}
    h_n := \sqrt{n} \left\{ \log \left( \frac{m_{n2}}{m_{n1}^2} \right) - \log \left( \frac{m_{2}}{m_{1}^2} \right) \right\}.
\end{align*}
By the CLT, the limiting distribution of $\sqrt{n}(m_{n1} - m_1, m_{n2} - m_2)'$ is $\mathcal{N}_2(0, \Sigma)$, where
\begin{align*}
    \Sigma := \begin{pmatrix}
    \mathrm{Var}(x) & \mathrm{Cov}\{x, x(x - 1) \} \\
    \mathrm{Cov}\{x, x(x - 1) \} & \mathrm{Var}\{ x(x - 1) \}.
    \end{pmatrix}
\end{align*}
Arguing as in the proof of Lemma \ref{lem:covariance_estimators} and using the fact that the $j$th factorial moment of the Poisson distribution is equal to the $j$th power of its mean, we find that the $j$th factorial moment $m_j := \mathrm{E}\{ x ( x - 1 ) \cdots (x - j + 1) \}$ of $x$ satisfies $m_j = \exp \{ j \mu + (1/2) j^2 \sigma^2 \}$. The first four factorial and regular moments $t_j := \mathrm{E}(x^j)$ have the relationships $t_1 = m_1$, $t_2 = m_1 + m_2$, $t_3 = m_1 + 3 m_2 + m_3$ and $t_4 = m_1 + 7 m_2 + 6 m_3 + m_4$, implying that
\begin{align*}
    \Sigma = \begin{pmatrix}
    m_1 + m_2 - m_1^2 & 2 m_2 + m_3 - m_1 m_2 \\
    2 m_2 + m_3 - m_1 m_2 & 2 m_2 + 4 m_3 + m_4 - m_2^2.
    \end{pmatrix}
\end{align*}
As the gradient of the map $(a_1, a_2) \mapsto \log(a_2/a_1^2)$ is $(-2/a_1, 1/a_2)$, the delta method entails that the limiting distribution of $h_n$ is normal with zero mean and the variance
\begin{align*}
    & (-2 m_1^{-1}, m_2^{-1}) \Sigma (-2 m_1^{-1}, m_2^{-1})'\\
    =&  -4 m_1^{-1} + 4 m_2 m_1^{-2} - 4 m_3 m_1^{-1} m_2^{-1} + 2 m_2^{-1} + 4 m_3 m_2^{-2} + m_4 m_2^{-2} - 1.
\end{align*}
Plugging in the values $m_j = \exp \{ j \mu + (1/2) j^2 \sigma^2 \}$ now reveals that the asymptotic variance does not indeed simplify any further.

\section{Proofs}\label{sec:proofs}

\begin{proof}[Proof of Lemma \ref{lem:covariance_estimators}]
    It is sufficient to show the claim for $S_1$, as the result for $S_2$ follows instantly after transposition of the model.
    
    By the law of total expectation,
    \begin{align*}
        \mathrm{E}(x_{j\ell}) = \mathrm{E} \{ \mathrm{E}(x_{j\ell} \mid Z) \} = \mathrm{E} \{ \exp( \mu_{j\ell} + u_{1,j}' Z u_{2, \ell}) \},
    \end{align*}
    where $u_{a,j} \in \mathbb{R}^{d_a}$ denotes the $j$th row of $U_a$, for $a = 1, 2$. The distribution of $u_{1,j}' Z u_{2, \ell}$ is $\mathcal{N}(0, \tau^2 u_{1,j}' \Lambda_1 u_{1,j} u_{2,\ell}' \Lambda_2 u_{2,\ell})$, showing that
    \begin{align*}
        \mathrm{E}(x_{j\ell}) = \exp( \mu_{j\ell} + (1/2) \tau^2 u_{1,j}' \Lambda_1 u_{1,j} u_{2,\ell}' \Lambda_2 u_{2,\ell} ).
    \end{align*}
    One can similarly establish that
    \begin{align*}
        \mathrm{E} \{ x_{j \ell} (x_{j \ell} - 1) \} = \exp( 2 \mu_{j\ell} + 2 \tau^2 u_{1,j}' \Lambda_1 u_{1,j} u_{2,\ell}' \Lambda_2 u_{2,\ell} ),
    \end{align*}
    and that
    \begin{align*}
        \mathrm{E} ( x_{j \ell} x_{k \ell} ) = \exp \{ \mu_{j\ell} + \mu_{k\ell} + (1/2) \tau^2 (u_{1,j} + u_{1,k})' \Lambda_1 (u_{1,j} + u_{1,k}) u_{2,\ell}' \Lambda_2 u_{2,\ell} \}, 
    \end{align*}
    where the latter result assumes $j \neq k$ and uses the conditional independence of $ x_{j \ell} $ and $ x_{k \ell} $ given $Z$. Plugging now in to the definitions of $s_{1, jk}$ and $s_{1, jj}$ in \eqref{eq:left_side_s_elements}  gives
    \begin{align*}
        s_{1, jk} = \frac{1}{p_2} \sum_{\ell= 1}^{p_2} \tau^2 u_{1,j}' \Lambda_1 u_{1,k} u_{2,\ell}' \Lambda_2 u_{2,\ell} = \frac{1}{p_2} \mathrm{tr} ( U_2 \Lambda_2 U_2') \tau^2 u_{1,j}' \Lambda_1 u_{1,k} = \tau^2 u_{1,j}' \Lambda_1 u_{1,k},
    \end{align*}
    showing the claim for the off-diagonal elements $s_{1, jk}$. A similar plug-in the gives the analogous result also for the diagonal elements $s_{1, jj}$.  
\end{proof}

\begin{proof}[Proof of Theorem \ref{theo:asymptotics}]
    We show the result only for $S_{n1}$, the rest following analogously (or, in the case of $t_n^2$, from the convergence of $S_{n1}$ and $S_{n2}$). As the $k$th factorial moment of a $\mathrm{Po}(\lambda)$-variate equals $\lambda^k$, computations analogous to the ones in the proof of Lemma \ref{lem:covariance_estimators} show that factorial moments and cross-moments of all orders exist as finite for the elements of $X$. As a consequence of this, also raw moments and cross-moments of all orders exist and are finite for the elements of $X$. Consequently, by the central limit theorem, the vector $(1/n) \sum_{i = 1}^n ( \ldots, x_{i, jk}, \ldots , x_{i, jk}\{x_{i, jk} - 1\}, \ldots x_{i, j k} x_{i, \ell t}, \ldots )'$ of all sample factorial moments of the first and second order is asymptotically normal.
    
    Consider now the maps $g_1: \mathbb{R}^3 \setminus \{ 0 \} \to \mathbb{R}$ and $g_2: \mathbb{R}^2 \setminus \{ 0 \} \to \mathbb{R}$ acting as $(y_1, y_2, y_3)' \mapsto \log \{ y_1/(y_2 y_3) \} $ and $(y_1, y_2)' \mapsto \log ( y_1/y_2^2 ) $. The gradients of the maps are $\nabla g_1(y_1, y_2, y_3) =  (1/y_1, -1/y_2, -1/y_3)$ and $\nabla g_2(y_1, y_2) =  (1/y_1, -2/y_2)$. We also have, by the proof of Lemma~\ref{lem:covariance_estimators}, that all expected values of the forms $\mathrm{E}(x_{j\ell})$, $ \mathrm{E}(x_{j\ell} x_{k \ell}) $ and $ \mathrm{E}\{ x_{j\ell} (x_{j\ell} - 1) \} $ are positive under the model~\eqref{eq:matrix_poisson_model_conditional}. Consequently, by the delta method, all quantities of the forms
    \begin{align*}
        \log \left\{ \frac{\frac{1}{n} \sum_{i=1}^n x_{i, j\ell} x_{i, k \ell}}{ \left( \frac{1}{n} \sum_{i=1}^n x_{i, j\ell} \right) \left( \frac{1}{n} \sum_{i=1}^n x_{i, k \ell} \right)} \right\} \quad \mbox{and} \quad \log \left\{  \frac{\frac{1}{n} \sum_{i=1}^n x_{i, j\ell} (x_{i, j\ell} - 1)}{ \left( \frac{1}{n} \sum_{i=1}^n x_{i, j\ell} \right)^2} \right\}
    \end{align*}
    have a joint limiting normal distribution (with the root-$n$ convergence rate), from which it follows that $S_{n1}$ has a limiting normal distribution, thus proving the claim.
\end{proof}

\begin{proof}[Proof of Lemma \ref{lem:normal_conditional_distribution}]
    The vectorization of the model \eqref{eq:matrix_normal_model} reads,
    \begin{align*}
        x = m + U z + \mathrm{vec}(\varepsilon),
    \end{align*}
    where $ z \sim \mathcal{N}_d (0, \tau^2 \Lambda) $ is independent of $\mathrm{vec}(\varepsilon) \sim \mathcal{N}_p (0, \sigma^2 I_p)$. Consequently, the joint distribution of $ (z', x')' $ is
	\[ 
	\begin{pmatrix}
	z \\
	x
	\end{pmatrix} \sim
	\mathcal{N}_{d + p} \left\{ \begin{pmatrix}
	0 \\
	m
	\end{pmatrix}, \begin{pmatrix}
	\tau^2 \Lambda & \tau^2\Lambda U' \\
	\tau^2 U \Lambda & \tau^2 U \Lambda U' + \sigma^2 I_p
	\end{pmatrix} \right\}.
	\]
	Now, $ ( \tau^2 \Lambda U' ) ( \tau^2 U \Lambda U' + \sigma^2 I_p )^{-1} = \tau^2 \Lambda (\tau^2 \Lambda + \sigma^2 I_d)^{-1} U' $, and by the standard properties of Gaussian conditional distributions,
	\[ 
	z \mid x \sim \mathcal{N}_d \{ a(x), B \},
	\]
	where $a(x) := \tau^2 \Lambda ( \tau^2 \Lambda + \sigma^2 I_d)^{-1} U' ( x - m )$ and $B := \{ I_d - \tau^2 \Lambda ( \tau^2 \Lambda + \sigma^2 I_d)^{-1} \} \tau^2 \Lambda$. The result now follows.
\end{proof}

\begin{proof}[Proof of Theorem \ref{theo:logarithmic_conditional_density}]

    The Bayes rule implies that,
    \begin{align}\label{eq:bayes_rule}
        f_{z|x}(z|x) = C_0 f_{x|z}( x \mid z ) f_z ( z ) ,   
    \end{align}
    for some positive constant $C_0$ not depending on $z$. Then, indexing the elements of $x$ as $x_j$, $j = 1, \ldots , p$, we have,
    \begin{align*}
    \log f_{x|z} (x \mid z) = - \sum_{j = 1}^p \log x_j! + x' h(z) - 1'\exp\{ h(z) \},
    \end{align*}
    where the exponential function is taken element-wise and we denote $h(z) := m + U z$. Additionally,
    \begin{align*}
        \log f_z ( z ) = - \frac{d}{2} \log 2 \pi - \frac{1}{2} \tau^{2d} \log |\Lambda| - \frac{1}{2 \tau^2} z' \Lambda^{-1} z.
    \end{align*}
    Plugging the previous two formulas into \eqref{eq:bayes_rule} now yields result \textit{i)}.
    
    To see \textit{ii)}, we fix $x$ and first establish the strict concavity by showing that the Hessian of $z \mapsto \ell (z | x)$ is negative definite. The gradient of the map is
    \begin{align*}
        \nabla \ell(z | x) = U' x - U' \exp \{ h(z) \} - \tau^{-2} \Lambda^{-1} z,
    \end{align*}
    giving further the Hessian,
    \begin{align*}
        \nabla^2 \ell(z | x) =  - U' \mathrm{diag}[\exp \{ h(z) \}] U - \tau^{-2} \Lambda^{-1}.
    \end{align*}
    The diagonal matrices $\mathrm{diag}[\exp \{ h(z) \}]$ and $\Lambda^{-1}$ have strictly positive diagonal elements (and $\tau > 0$), making the Hessian negative definite and implying that $\ell (z | x)$ is indeed strictly concave in $z$, for all $x$.
    
    We next show that $z \mapsto \ell (z | x)$ is coercive in the sense that, for all sequences $z_n \in \mathbb{R}^d$ such that $\| z_n \| \rightarrow \infty$, we have $\ell(z_n | x) \rightarrow -\infty$. Combined with the continuity of $ z \mapsto \ell (z | x) $, the coercivity will then imply that the function admits at least one global maximizer, and strict concavity then guarantees that the maximizer is unique.
    
    Let thus $z_n \in \mathbb{R}^d$ be such that $\| z_n \| \rightarrow \infty$ and write $\alpha_n := \| z_n \| \rightarrow \infty$ and $u_n := z_n/\| z_n \|$. Then,
    \begin{align*}
        \ell(z_n | x) &= C + \alpha_n x' U u_n - 1' \exp \{ h( \alpha_n u_n ) \} - \frac{\alpha_n^2}{2 \tau^2} u_n' \Lambda^{-1} u_n \\
        &\leq C + \alpha_n x' U u_n  - \frac{\alpha_n^2}{2 \tau^2} u_n' \Lambda^{-1} u_n\\
        &\leq C + \alpha_n \| U' x \| - \frac{\alpha_n^2}{2 \tau^2} \phi_{d}( \Lambda^{-1} ),
    \end{align*}
    where $\phi_{d}( \Lambda^{-1} ) > 0$ denotes the smallest eigenvalue of the positive definite matrix $ \Lambda^{-1} $. The derived upper bound is dominated 
    by the quadratic term, thus guaranteeing that $\ell(z_n | x) \rightarrow -\infty$, as desired.
\end{proof}

\bibliographystyle{chicago}

\bibliography{references_poisson}

\end{document}